\renewcommand{\qed}{\hfill\small{$\square$}\normalsize}
\theoremstyle{definition}
\newtheorem{lemma}{Lemma}[section]
\newtheorem{definition}[lemma]{Definition}
\newtheorem{proposition}[lemma]{Proposition}
\newtheorem{theorem}[lemma]{Theorem}
\newtheorem{corollary}[lemma]{Corollary}
\newtheorem{example}{Example}
\newtheorem{remark}{Remark}
\newtheorem{conjecture}{Conjecture}
\numberwithin{equation}{section}
\renewcommand{\qed}{\hfill\small{$\square$}\normalsize}
\DeclareFixedFont{\Acknowledgment}{OT1}{cmr}{bx}{n}{14pt}
\begin{document}

\title{\bf $\alpha$-curvatures and $\alpha$-flows on low dimensional triangulated manifolds}
\author{Huabin Ge, Xu Xu}
\maketitle

\begin{abstract}
In this paper, we introduce two discrete curvature flows, which are called $\alpha$-flows on two and three dimensional triangulated manifolds. For triangulated surface $M$, we introduce a new normalization of combinatorial Ricci flow (first introduced by Bennett Chow and Feng Luo \cite{CL1}),
aiming at evolving $\alpha$ order discrete Gauss curvature to a constant.
When $\alpha\chi(M)\leq0$, we prove that the convergence of the flow is equivalent to the existence of constant $\alpha$-curvature metric. We further get a necessary and sufficient combinatorial-topological-metric condition, which is a generalization of Thurston's combinatorial-topological condition, for the existence of constant $\alpha$-curvature metric. For triangulated 3-manifolds, we generalize the combinatorial Yamabe functional and combinatorial Yamabe problem introduced by the authors in \cite{GX2,GX4} to $\alpha$-order. We also study the $\alpha$-order flow carefully,
aiming at evolving $\alpha$ order combinatorial scalar curvature, which is a generalization of Cooper and Rivin's combinatorial scalar curvature, to a constant.
\end{abstract}

\section{Introduction}\label{Introduction}
Calculations in local coordinate charts are indispensable for the study of smooth manifolds. However, an entirely different way came from Regge \cite{Re}. The basic procedure is to triangulate a manifold to simplexes, and finally obtain the geometrical and topological information of the manifold by studying the geometry in a simplicial complex and the combinatorial structure in the triangulation. On triangulated manifolds, the most general discrete metric seems to be piecewise linear metric which is defined on all edges. Discrete curvatures are determined by discrete metrics. Roughly speaking, discrete metrics are edge lengthes while discrete curvatures are angles between sub-simplexes and their combinatorics. Besides the general piecewise linear metrics, there are discrete metrics defined on all vertices. This type of metrics may be considered as a discrete conformal class. In his work on constructing hyperbolic metrics on 3-manifolds, Thurston \cite{T1} introduced circle packing metric on a triangulated surface with prescribed intersection angles. For triangulated 3-manifolds, Cooper and Rivin introduced a sphere packing metric. We shall study circle (sphere) packing metrics and the corresponding discrete curvatures in the following.

Suppose $M$ is a closed surface with triangulation $\mathcal{T}=\{V,E,F\}$, where $V,E,F$ represent the sets of vertices, edges and faces respectively.
 $\Phi: E\rightarrow [0,\frac{\pi}{2}]$ is a function evaluating each edge $i\sim j$ a weight $\Phi_{ij}$.
 The triple $(M, \mathcal{T}, \Phi)$ will be referred as a weighted triangulation of $M$ in the following. All the vertices are supposed to be ordered one by one, marked by $1, \cdots, N$, where $N=|V|$ is the number of vertices. Throughout this paper, all functions $f: V\rightarrow \mathds{R}$ will be regarded as column vectors in $\mathds{R}^N$ and $f_i$ is the value of $f$ at $i$. And we use $C(V)$ to denote the sets of functions defined in this way.

Each map $r:V\rightarrow (0,+\infty)$ is called a circle packing metric. Given $(M, \mathcal{T}, \Phi)$, we can attach each edge $\{ij\}$ a length $l_{ij}=\sqrt{r_i^2+r_j^2+2r_ir_j\cos \Phi_{ij}}$. It is proved \cite{T1} that the lengths $\{l_{ij}, l_{jk}, l_{ik}\}$ satisfies the triangle inequality for each face $\{ijk\}\in F$, which ensures that the face $\{ijk\}$ could be realized as a Euclidean triangle with lengths $\{l_{ij}, l_{jk}, l_{ik}\}$. Suppose $\theta_i^{jk}$ is the inner angle of triangle $\{ijk\}$
at the vertex $i$, the classical discrete Gauss curvature at the vertex $i$ is defined as
\begin{equation}\label{Def-Gauss curv}
K_i=2\pi-\sum_{\{ijk\}\in F}\theta_i^{jk},
\end{equation}
where the sum is taken over all the triangles with $i$ as one of its vertices. Then
\begin{equation*}\label{Gauss-Bonnet without weight}
\sum_{i=1}^NK_i=2\pi \chi(M),
\end{equation*}
which may be considered as a discrete version of Gauss-Bonnet formula. Denote $K_{av}=2\pi \chi(M)/N$. Notice that constant $K$-curvature metric, i.e. a metric $r$ with $K_i=K_{av}$ for all $i\in V$, does not always exist. In fact, for any proper subset $I\subset V$, let $F_I$ be the subcomplex whose vertices are in $I$ and let $Lk(I)$ be the set of pairs $(e, v)$ of an edge $e$ and a vertex $v$ such the following three conditioins: (1) the end points of $e$ are not in $I$; (2) $v$ is in $I$; (3) $e$ and $v$ form a triangle. Thurston \cite{T1} proved that the existence of constant $K$-curvature metric is equivalent the following combinatorial-topological conditions
\begin{equation}\label{condition-Thurston}
2\pi\chi(M)\frac{|I|}{|V|} >-\sum_{(e,v)\in Lk(I)}(\pi-\Phi(e))+2\pi\chi(F_I), \;\;\forall I: \phi\subsetneqq I\subsetneqq V.
\end{equation}
Moreover, the constant $K$-curvature metric is unique, if exists, up to the scaling of $r$.

Chow and Luo \cite{CL1} first established an intrinsic connection between Thurston's circle packing and surface Ricci flow.
They introduced a combinatorial Ricci flow
\begin{equation}\label{Def-ChowLuo's flow}
\frac{dr_i}{dt}=-K_ir_i
\end{equation}
and its normalization
\begin{equation}\label{Def-ChowLuo's normalized flow}
\frac{dr_i}{dt}=(K_{av}-K_i)r_i.
\end{equation}
Then they proved that flow (\ref{Def-ChowLuo's normalized flow}) converges iff. the constant $K$-curvature metric exists, iff. Thurston's combinatorial and topological conditions (\ref{condition-Thurston}) are satisfied.

Inspired by Chow and Luo's work, the first author introduced a combinatorial Calabi flow
\begin{equation}
\frac{dr_i}{dt}=\Delta K_ir_i
\end{equation}
in \cite{Ge} and proved similar results. The authors also studied the corresponding problem in hyperbolic background
geometry using combinatorial Calabi flow \cite{GX1}.

The paper is organized as follows. In Section 2, we introduce the $\alpha$th order Yamabe flow for surfaces and use it to study the
corresponding constant and prescribing curvature problem. In Section 3, we study the constant $\alpha$-curvature problem for
triangulated 3-manifolds. And Section 4 is devoted to some useful lemmas used in the paper.

\section{$\alpha$ order combinatorial flow in two dimension}
\label{section-2d}
As pointed by the authors in \cite{GX4}, there are two disadvantages of the classical definition of $K_i$. For one thing, classical discrete Gauss curvature does not perform so perfectly in that it is scaling invariant, i.e. if $\tilde{r}_i=\lambda r_i$ for some positive constant $\lambda$, then $\tilde{K}_i=K_i$, which is different from the transformation of scalar curvature $R_{\lambda g}=\lambda ^{-1}R_g$ in smooth case. For another, classical discrete Gauss curvature can't be used directly to approximate smooth Gauss curvature since it always tends to zero as the triangulation becomes finer and finer. To amend this flaw, we suggested a new definition of discrete Gauss curvature by dividing an ``area element" $A_i$.

We observed that the easiest form of ``area element" may be $A_i=\pi r_i^{\alpha}$, where $\alpha$ is any real number. For convenience, we omitted the coefficient $\pi$ in the following definition.
\begin{definition}\label{def-R-curvature}
Given a weighted triangulated surface $(M, \mathcal{T}, \Phi)$ with circle packing metric $r: V\rightarrow (0,+\infty)$,
the discrete Gauss curvature of order $\alpha$ (``$\alpha$-curvature" for short) at the vertex $i$ is defined to be
\begin{equation}
R_{\alpha,i}=\frac{K_i}{r_i^{\alpha}},
\end{equation}
where $K_i$ is the classical discrete Gauss curvature defined as the angle deficit at $i$ by (\ref{Def-Gauss curv}).
\end{definition}

For the classical discrete Gauss curvature $K$, \cite{T1,MR,CL1} provide a complete description of
the space of admissible $K$-curvature $\{K=K(r)|r\in \mathds{R}^N_{>0}\}$, see also Lemma \ref{Lemma-admissible-curvature space}
in this paper. Similarly, it's interesting to know how to describe the space of admissible $\alpha$-curvatures.

\begin{remark}
For the special case $\alpha=2$, $A_i=\pi r_i^2$ is just the area of the disk packed at $i$. This case is especially interesting.
It was shown \cite{GX4} that $r_i^2$ is a suitable analogue of smooth Riemannian metric tensor $g$.
Furthermore, the $2$-curvature is very similar to the smooth Gauss curvature.
On one hand, it has similar scaling law; On the other hand, it can be used to approximate the smooth Gauss curvature.
\end{remark}

One may ask if there are circle packing metrics whose corresponding $\alpha$-curvatures are constants or have prescribing curvatures.
Curvature flow method gives an efficient approach to study these questions. The smooth 2-dimensional Yamabe flow, which is the same as Ricci flow, is $\partial g/\partial t=(r-R)g$. It is Chow and Luo's insight to consider $r_i$ as an analogue of smooth Riemannian metric tensor $g$, $K_i$ as an analogue of smooth Riemannian curvature $K=R/2$ and $K_{av}$ as an analogue of smooth average curvature $r$. Thus the classical combinatorial Ricci flow (\ref{Def-ChowLuo's normalized flow}) comes into being. Inspired by their work and the idea of taking $r_i^2$ as a suitable analogue of smooth Riemannian metric tensor $g$ and $K_i/r_i^2$ as an appropriate discrete version of Gaussian curvature $K$, we \cite{GX4} defined a combinatorial Ricci flow as
\begin{equation}\label{Def-new alpha-Ricci flow}
\frac{dg_i}{dt}=(R_{av}-R_i)g_i,
\end{equation}
where $g_i=r_i^2$ and $R_{av}=2\pi\chi(M)/\|r\|^2$.
We further proved that, for surfaces with non-positive Euler characteristic, the flow (\ref{Def-new Ricci flow}) converges iff.
there exists a constant $2$-curvature metric.
For $\alpha$-curvature, we proved similar results for surface $M$ with $\alpha \chi(M)\leq0$ by introducing a modified flow
\begin{equation}\label{Def-new Ricci flow}
\frac{dr_i}{dt}=(R_{\alpha,av}-R_{\alpha,i})r_i,
\end{equation}
where $R_{\alpha,av}=2\pi\chi(M)/\|r\|^{\alpha}_{\alpha}$. However, there are no similar results for surface $M$ with $\alpha\chi(M)>0$. Even worse, we don't know whether the solution of flow (\ref{Def-new alpha-Ricci flow}) exists for all time $t\in(-\infty,+\infty)$.
For this reason, we introduce another flow here, the solution of which always exists for all time $t\in \mathds{R}$.

Set $u_i=\ln r_i$.
\begin{definition}
Given $(M, \mathcal{T}, \Phi)$ with circle packing metric $r$, denote $s_{\alpha}=2\pi\chi(M)/\|r\|^{\alpha}_{\alpha}$. The $\alpha$th order combinatorial Ricci (Yamabe) flow (``$\alpha$-flow" for short) is
\begin{equation}\label{Def-Yamabe flow-2d}
\frac{du_i}{dt}=s_{\alpha}r_i^{\alpha}-K_i.
\end{equation}
\end{definition}
\begin{remark}
Chow and Luo's normalized Ricci flow (\ref{Def-ChowLuo's normalized flow}) is in fact the $0$-flow defined above with $\alpha=0$. Hence $\alpha$-flow (\ref{Def-Yamabe flow-2d}) may be considered as a different normalization of Chow and Luo's flow (\ref{Def-ChowLuo's flow}).
\end{remark}

Formally, $\alpha$-flow seems plainer and simpler than flow (\ref{Def-new Ricci flow}). Furthermore, we have
\begin{proposition}\label{Prop-longtime-existence}
The solution to discrete Yamabe flow (\ref{Def-Yamabe flow-2d}) exists for all time $t\in \mathds{R}$.
\end{proposition}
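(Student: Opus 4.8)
The plan is to combine standard ODE theory with uniform a priori bounds on the right-hand side of (\ref{Def-Yamabe flow-2d}). Writing $r_i=e^{u_i}$, I regard the flow as an autonomous system $\dot u=\Gamma(u)$ on $\mathds{R}^N$, where $\Gamma_i(u)=s_{\alpha}e^{\alpha u_i}-K_i$ and $s_{\alpha}=2\pi\chi(M)/\sum_j e^{\alpha u_j}$. The crucial structural fact is that, because every weight satisfies $\Phi_{ij}\in[0,\frac{\pi}{2}]$, the edge lengths $l_{ij}=\sqrt{r_i^2+r_j^2+2r_ir_j\cos\Phi_{ij}}$ always obey the triangle inequality in each face (as recalled above, following \cite{T1}); hence every inner angle $\theta_i^{jk}$, and therefore each $K_i$, is a smooth (indeed real-analytic) function of $r\in\mathds{R}^N_{>0}$, equivalently of $u\in\mathds{R}^N$. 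Thus $\Gamma$ is smooth on all of $\mathds{R}^N$ --- there is no ``boundary'' in the configuration space to escape toward --- and the Picard--Lindel\"{o}f theorem gives a unique solution $u(t)$ on a maximal open interval $(T_-,T_+)\ni 0$.

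Next I would show that $\Gamma$ is uniformly bounded on $\mathds{R}^N$. For the curvature term, since each triangle around $i$ contributes an angle in $(0,\pi)$ at $i$ and there are $d_i:=\deg(i)$ such triangles, we have $2\pi-\pi d_i<K_i<2\pi$, so $|K_i|\le C_1$ for a constant $C_1=C_1(\mathcal T)$ depending only on the triangulation. For the normalization term, note that $e^{\alpha u_i}$ is one of the strictly positive summands of $\sum_j e^{\alpha u_j}$, so $0<e^{\alpha u_i}/\sum_j e^{\alpha u_j}<1$ regardless of the sign of $\alpha$; consequently $|s_{\alpha}e^{\alpha u_i}|=2\pi|\chi(M)|\cdot e^{\alpha u_i}/\sum_j e^{\alpha u_j}\le 2\pi|\chi(M)|=:C_2$. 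Hence $|\dot u_i(t)|=|\Gamma_i(u(t))|\le C_1+C_2=:C$ for all $t\in(T_-,T_+)$, with $C$ independent of the metric.

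Finally I would rule out finite-time blow-up. Integrating the last bound gives $|u_i(t)-u_i(0)|\le C|t|$, so if $T_+<\infty$ then $\limsup_{t\to T_+^-}\|u(t)\|\le\|u(0)\|+CT_+<\infty$, i.e.\ $u(t)$ stays in a fixed compact subset of $\mathds{R}^N$ as $t\to T_+$. Since the vector field is smooth on all of $\mathds{R}^N$, this contradicts the standard extension criterion (a maximal solution must leave every compact set as $t\to T_+$). Therefore $T_+=+\infty$, and applying the same argument to $-t$ gives $T_-=-\infty$, so the solution exists for all $t\in\mathds{R}$.

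There is no deep obstacle here --- the statement is true precisely because the $\alpha$-normalization was designed so that the driving term $s_{\alpha}r_i^{\alpha}$ stays bounded. The one point deserving care is exactly this bound when $\alpha<0$: unlike the term $R_{\alpha,i}=K_i/r_i^{\alpha}$ appearing in flow (\ref{Def-new Ricci flow}), which blows up as some $r_i\to0$, the ratio $r_i^{\alpha}/\|r\|_{\alpha}^{\alpha}$ here remains in $(0,1)$ for every real $\alpha$, and this is what makes the $u$-coordinate vector field complete. I would make sure to isolate this observation cleanly, since it is the whole reason the $\alpha$-flow is better behaved than (\ref{Def-new Ricci flow}).
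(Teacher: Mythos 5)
Your proposal is correct and follows essentially the same route as the paper: bound $|s_{\alpha}r_i^{\alpha}|\leq 2\pi|\chi(M)|$ via $r_i^{\alpha}/\|r\|_{\alpha}^{\alpha}\in(0,1)$, bound $K_i$ using the vertex degree, and invoke the ODE extension theorem to rule out finite-time blow-up. The extra detail you supply (smoothness of the vector field in the $u$-coordinates and the linear growth estimate $|u_i(t)-u_i(0)|\leq C|t|$) just makes explicit what the paper leaves implicit.
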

\begin{proof}
Notice that $|s_{\alpha}r_i^{\alpha}|\leq2\pi|\chi(M)|$, $(2-d)\pi\leq K_i\leq 2\pi$, where $d=\max\limits_{1\leq i\leq N} d_i$ and $d_i$ is the degree of vertex $i$. Hence $|s_{\alpha}r_i^{\alpha}-K_i|$ is uniformly bounded by a constant depending only on the topology and the triangulation. Then by the extension theorem for solution in ODE theory, the solution to (\ref{Def-Yamabe flow-2d}) exists for all $t\in(-\infty,+\infty)$.\qed\\
\end{proof}

\begin{proposition}\label{Prop-negative-gradient-flow}
$\alpha$-flow (\ref{Def-Yamabe flow-2d}) is a negative gradient flow. Moreover, $\prod_{i=1}^N r_i(t)$ preserves to be a constant along $\alpha$-flow.
\end{proposition}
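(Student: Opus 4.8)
The plan is to exhibit an explicit function $\mathcal{C}$ on the space of metrics (equivalently on $u$-space) whose negative gradient is exactly the right-hand side of (\ref{Def-Yamabe flow-2d}). First I would recall the classical fact, going back to Colin de Verdière and used in \cite{CL1,Ge}, that the combinatorial Gauss curvature $K_i$, viewed as a function of $u=(u_1,\dots,u_N)$ with $u_i=\ln r_i$, is a gradient: there is a (locally defined, but extendable) function $\mathcal{W}(u)$ with $\partial \mathcal{W}/\partial u_i = K_i$, and moreover $\sum_i K_i = 2\pi\chi(M)$ is constant. Next I would handle the term $s_\alpha r_i^\alpha$: since $s_\alpha = 2\pi\chi(M)/\|r\|_\alpha^\alpha$ and $r_i = e^{u_i}$, one has $s_\alpha r_i^\alpha = 2\pi\chi(M)\, e^{\alpha u_i}/\sum_j e^{\alpha u_j}$, and this is precisely $\partial/\partial u_i$ of the function $\tfrac{2\pi\chi(M)}{\alpha}\ln\!\big(\sum_j e^{\alpha u_j}\big) = \tfrac{2\pi\chi(M)}{\alpha}\ln\|r\|_\alpha^\alpha$ (with the obvious limiting interpretation $2\pi\chi(M)\cdot\tfrac1N\sum_j u_j$ when $\alpha=0$). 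Therefore, setting
\[
\mathcal{C}(u) = \mathcal{W}(u) - \frac{2\pi\chi(M)}{\alpha}\ln\Big(\sum_{j=1}^N e^{\alpha u_j}\Big),
\]
a direct computation gives $-\partial \mathcal{C}/\partial u_i = s_\alpha r_i^\alpha - K_i = du_i/dt$, so (\ref{Def-Yamabe flow-2d}) is the negative gradient flow of $\mathcal{C}$.

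For the second assertion, I would compute the time derivative of $P(t) := \sum_{i=1}^N u_i(t) = \ln\prod_i r_i(t)$ directly along the flow:
\[
\frac{d}{dt}\sum_{i=1}^N u_i = \sum_{i=1}^N\big(s_\alpha r_i^\alpha - K_i\big) = s_\alpha\sum_{i=1}^N r_i^\alpha - \sum_{i=1}^N K_i = s_\alpha\|r\|_\alpha^\alpha - 2\pi\chi(M).
\]
By the definition $s_\alpha = 2\pi\chi(M)/\|r\|_\alpha^\alpha$, the right-hand side is $2\pi\chi(M) - 2\pi\chi(M) = 0$. Hence $\sum_i u_i$, and therefore $\prod_i r_i$, is constant along the flow.

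The only genuinely non-routine point is the existence of the potential $\mathcal{W}$ with $\partial\mathcal{W}/\partial u_i = K_i$; everything else is a one-line differentiation or the definitional identity $\sum K_i = 2\pi\chi(M)$. I expect this to be dispatched by citing the symmetry of the Jacobian $\partial K_i/\partial u_j = \partial K_j/\partial u_i$ (the cotangent/rigidity identity for Euclidean circle packings, as in \cite{CL1}) together with simple connectivity of the admissible $u$-space, or by invoking the explicit Colin de Verdière–type energy already available in the literature and recalled in the lemmas of Section 4. It is worth remarking in passing that $\mathcal{C}$ need not be convex when $\alpha\chi(M)>0$, which is consistent with the difficulties mentioned just before the definition; convexity of $\mathcal{C}$ (hence of the flow's behaviour) will be the subject of the subsequent analysis when $\alpha\chi(M)\le 0$.
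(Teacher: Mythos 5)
Your proof is correct and follows essentially the same route as the paper: the paper defines the $\alpha$-potential $F(u)=\int_{u_0}^{u}\sum_i(K_i-s_\alpha r_i^\alpha)\,du_i$ as a line integral of a closed $1$-form and observes $\dot u=-\nabla_u F$, while you make the same potential explicit by splitting it into the Chow--Luo curvature potential $\mathcal{W}$ plus the exact term $\tfrac{2\pi\chi(M)}{\alpha}\ln\|r\|_\alpha^\alpha$ (a small but genuine service, since the paper merely asserts closedness of the normalization part). Your computation of $\tfrac{d}{dt}\sum_i u_i=0$ is identical to the paper's.
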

\begin{proof}
It is remarkable that Chow and Luo \cite{CL1} introduced a functional
\begin{equation*}
F(u)=\int_{u_0}^u\sum\nolimits_{i=1}^N\left(K_i-K_{av}\right)du_i,
\end{equation*}
which is called discrete Ricci potential, to prove the convergence of their Ricci flow (\ref{Def-ChowLuo's normalized flow}). With a slight modification, we define the $\alpha$-order discrete Ricci potential (``$\alpha$-potential" for short) as
\begin{equation}
F(u)=\int_{u_0}^u\sum\nolimits_{i=1}^N\left(K_i-s_{\alpha}r_i^{\alpha}\right)du_i,
\end{equation}
where $u_0\in\mathds{R}^N$ is arbitrary selected. The $\alpha$-potential is well defined since $\sum(K_i-s_{\alpha}r_i^{\alpha})du_i$ is a closed differential form. Thus $\alpha$-flow is in fact $\dot{u}=-\nabla_u F$.

Furthermore, $(\sum u_i)'=\sum (s_{\alpha}r_i^{\alpha}-K_i)=0$ implies that $\sum u_i(t)$ and hence $\prod r_i(t)$ are invariant along the $\alpha$-flow.\qed\\
\end{proof}

For this reason, we always assume the initial circle packing metric $r(0)$ belongs to the hypersurface
 $\prod_{i=1}^N r_i=1$ in the following.

\begin{proposition}\label{Prop-converge imply CCCP-metric}
If the solution to $\alpha$-flow (\ref{Def-Yamabe flow-2d}) converges, then there exists at least a constant $\alpha$-curvature metric.
\end{proposition}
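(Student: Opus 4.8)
The plan is to extract a convergent subsequence of metrics and identify its limit as a constant $\alpha$-curvature metric, then upgrade ``subsequential'' to ``genuine'' limit using the gradient-flow structure. Concretely, suppose $u(t)\to u^*$ as $t\to+\infty$ (this is what ``converges'' should mean, since the metrics live on the hypersurface $\prod r_i=1$, which pins down the additive normalization). First I would note that $F(u(t))$ is non-increasing along the flow by Proposition \ref{Prop-negative-gradient-flow}, since $\frac{d}{dt}F(u(t))=\langle\nabla_u F,\dot u\rangle=-\|\nabla_u F\|^2\leq 0$. If in addition $F$ is bounded below along the orbit, then $\int_0^\infty\|\nabla_u F(u(t))\|^2\,dt<\infty$, so there is a sequence $t_n\to\infty$ with $\|\nabla_u F(u(t_n))\|\to 0$; but $u(t_n)\to u^*$ and $\nabla_u F$ is continuous (the angles $K_i$ depend smoothly on $u$ on the relevant domain), hence $\nabla_u F(u^*)=0$, i.e. $K_i(u^*)=s_\alpha(u^*)\,r_i^{*\alpha}$ for every $i$. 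Summing over $i$ and using the discrete Gauss--Bonnet formula $\sum_i K_i=2\pi\chi(M)$ together with the definition $s_\alpha=2\pi\chi(M)/\|r\|_\alpha^\alpha$ confirms that this is consistent, and it exhibits $r^*$ as a metric whose $\alpha$-curvature $R_{\alpha,i}=K_i/r_i^\alpha\equiv s_\alpha(u^*)$ is constant.

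The cleanest way to organize this is to avoid the separate ``bounded below'' argument: since $u(t)\to u^*$ and $F$ is continuous, $F(u(t))\to F(u^*)$, so $F$ is automatically bounded along the orbit and the integral $\int_0^\infty\|\nabla_uF\|^2\,dt=F(u(0))-F(u^*)$ is finite. That immediately yields the subsequence $t_n$ with $\nabla_uF(u(t_n))\to 0$, and continuity finishes it. Alternatively, and even more directly, one can argue that $\dot u(t)=-\nabla_uF(u(t))\to -\nabla_uF(u^*)$ by continuity, while if $u(t)$ converges then $\dot u(t)$ cannot tend to a nonzero limit (otherwise $u$ would run off to infinity linearly), forcing $\nabla_uF(u^*)=0$; this is the slickest route and I would use it as the main line, mentioning the Lyapunov argument as backup.

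The one point requiring care — and the place I expect to be the main obstacle — is ensuring that the limiting configuration $r^*$ is still a genuine circle packing metric, i.e. that $r_i^*\in(0,+\infty)$ and that all the triangle inequalities $l_{ij}+l_{jk}>l_{ik}$ for lengths $l_{ij}=\sqrt{r_i^2+r_j^2+2r_ir_j\cos\Phi_{ij}}$ remain strict in the limit, so that $K_i$ is defined and smooth at $u^*$. If ``converges'' is interpreted in the coordinates $u_i=\ln r_i$ (the natural choice here, as the flow is written in $u$), then $u^*\in\mathbb{R}^N$ automatically gives $r_i^*\in(0,\infty)$; the triangle-inequality region in $u$-space is open, but one must rule out $u^*$ sitting on its boundary. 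Here I would invoke the structural facts about the admissible space recalled just before the statement (Lemma \ref{Lemma-admissible-curvature space} and the behavior of $K_i$), noting that $K_i$ extends continuously by $0$ outside a vertex's star-degeneracy and stays in $[(2-d_i)\pi,2\pi]$, so the equation $K_i(u^*)=s_\alpha r_i^{*\alpha}$ still makes sense and, combined with $\sum K_i=2\pi\chi(M)$ being achieved only at interior configurations, forces $u^*$ to be interior. Once interiority is established everything else is routine, and I would relegate the degeneracy discussion to a short remark or fold it into a citation of the cited admissibility results.
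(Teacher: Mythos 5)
Your main line --- $\dot u(t)=-\nabla_uF(u(t))\to-\nabla_uF(u^*)$ by continuity, so a nonzero limit would force $u(t)$ to drift off linearly --- is exactly the paper's (one-sentence) argument that a convergent solution of the autonomous system must converge to a critical point, and that the critical points of (\ref{Def-Yamabe flow-2d}) are precisely the constant $\alpha$-curvature metrics. The degeneracy worry in your last paragraph is moot in this setting: for Thurston circle packings with $\Phi\in[0,\frac{\pi}{2}]$ the triangle inequalities hold for every $r\in\mathds{R}^N_{>0}$, so $K$ is smooth on all of $\mathds{R}^N$ in the $u$-coordinates and there is no admissibility boundary for $u^*$ to land on.
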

\begin{proof}
Notice that constant $\alpha$-curvature metrics are exactly the critical points of ODE system (\ref{Def-Yamabe flow-2d}). If the flow converges, it
 converges to its critical points. This implies the existence of constant $\alpha$-curvature metric. \qed\\
\end{proof}

Discrete Laplace operators are closely related to discrete curvature flows. Denote
\begin{equation}
L=\frac{\partial(K_1, \cdots, K_N)}{\partial (u_1,\cdots,u_N)}
\end{equation}
as the Jacobian matrix of the curvature map. It's very interesting that $L: C(V)\rightarrow C(V)$ with $f\mapsto Lf$ can be considered as the prototype of a type of discrete Laplace operator \cite{CL1,G3,Ge,GX4}, which comes from the dual structure of circle patterns. Chow and Luo proved that $L$ is positive semi-definite everywhere in $\mathds{R}_{>0}^N$. Moreover, $rank(L)=N-1$ and the kernel of $L$ is $t(1,\cdots,1)^T$, $t\in \mathds{R}$. Other properties of $L$ is exhibited in Lemma \ref{Lemma-property of L}. The authors once defined \cite{GX4} a two dimensional $\alpha$-order combinatorial Laplacian as
\begin{equation}\label{Def-alpha Laplacian-2d}
\Delta_\alpha f_i=-\frac{1}{r_i^\alpha}\sum_{j=1}^N \frac{\partial K_i}{\partial u_j} f_j=-\frac{1}{r_i^\alpha}\sum_{j\thicksim i} \frac{\partial K_i}{\partial u_j}(f_j-f_i).
\end{equation}
Recall that we have considered each $f\in C(V)$ as a column vector, hence the two dimensional $\alpha$-Laplacian (\ref{Def-alpha Laplacian-2d}) can be written in a matrix form,
\begin{equation}
\Delta_{\alpha}=-\Sigma^{-\alpha}L
\end{equation}
with $\Delta_{\alpha} f=-\Sigma^{-\alpha}Lf$ for each $f\in C(V)$, where $\Sigma=diag\big\{r_1,\cdots,r_N\big\}$. It's interesting that we can consider $r_i^\alpha$ as an analogy of $d\mu$, where $d\mu$ is the area element in smooth case. We can define a $\alpha$ order inner product $\langle\cdot, \cdot\rangle_{\alpha}$ on $(M, \mathcal{T}, \Phi)$ with circle packing metric $r$ by
\begin{equation}\label{inner product}
\langle f, h \rangle_{\alpha}=\sum_{i=1}^{N}f_ih_ir_i^{\alpha}=h^T\Sigma^{\alpha}f
\end{equation}
for real functions $f, h\in C(V)$. Then the $\alpha$-Laplacian $\Delta_{\alpha}: C(V)\rightarrow C(V)$ is self-adjoint since
$$\langle \Delta_{\alpha}f, h\rangle_{\alpha}=\langle f, \Delta_{\alpha}h\rangle_{\alpha}$$
for any $f, h\in C(V)$.

Denote the first positive eigenvalue of $-\Delta_{\alpha}$ as $\lambda_1(-\Delta_{\alpha})$.
The following theorem shows that the first positive eigenvalue of
$\alpha$ order combinatorial Laplace operator is closely related to the behavior of discrete $\alpha$-flow.

\begin{theorem}\label{Thm-convergence-nonpositive Euler number}
Assuming $\lambda_1(-\Delta_{\alpha})>\alpha s_{\alpha}=\alpha\frac{2\pi\chi(M)}{\|r\|_{\alpha}^{\alpha}}$ at all $r\in \mathds{R}_{>0}^N$. Then the $\alpha$-flow (\ref{Def-Yamabe flow-2d}) converges iff. there exists at least a constant $\alpha$-curvature metric on $(M, \mathcal{T}, \Phi)$.
\end{theorem}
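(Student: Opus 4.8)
The plan is to reduce the convergence statement to a coercivity/convexity analysis of the $\alpha$-potential $F$. The ``only if'' direction is already handled by Proposition \ref{Prop-converge imply CCCP-metric}, so the substance is the ``if'' direction: assuming a constant $\alpha$-curvature metric $\bar r$ exists (normalized to $\prod \bar r_i = 1$, with corresponding $\bar u = \ln \bar r$), show the $\alpha$-flow starting from any $r(0)$ on the hypersurface $\prod r_i = 1$ converges, necessarily to $\bar r$. First I would record that, along the $\alpha$-flow, $\frac{d}{dt}F(u(t)) = -\|\nabla_u F\|^2 \le 0$, so $F$ is monotone decreasing; since by Proposition \ref{Prop-longtime-existence} the solution exists for all $t \in \mathds{R}$, it suffices to show $F$ restricted to the hyperplane $\Sigma_0 = \{u : \sum u_i = 0\}$ is proper (goes to $+\infty$ as $|u| \to \infty$ within $\Sigma_0$) and has a unique critical point. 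Properness plus monotonicity confines the trajectory to a compact sublevel set, and then a standard argument (the $\omega$-limit set consists of critical points of $F$, which are constant $\alpha$-curvature metrics, together with uniqueness of the critical point) forces convergence to $\bar u$.

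The two analytic facts I need are: (i) $F$ is strictly convex on $\Sigma_0$ under the hypothesis $\lambda_1(-\Delta_\alpha) > \alpha s_\alpha$, and (ii) $F$ is proper on $\Sigma_0$. For (i), the Hessian of $F$ is $\mathrm{Hess}\,F = L - \frac{\partial(s_\alpha r_i^\alpha)}{\partial u_j}$. The first term $L$ is the Chow--Luo Laplacian Jacobian, positive semidefinite with kernel spanned by $(1,\dots,1)^T$. The second term I would compute explicitly: writing $s_\alpha = 2\pi\chi(M)/\|r\|_\alpha^\alpha$ and $r_i^\alpha = e^{\alpha u_i}$, one gets $\frac{\partial(s_\alpha r_i^\alpha)}{\partial u_j} = \alpha s_\alpha\big(\delta_{ij} r_i^\alpha - \frac{r_i^\alpha r_j^\alpha}{\|r\|_\alpha^\alpha}\big)$, i.e. $\alpha s_\alpha \Sigma^\alpha (\mathrm{Id} - P)$ where $P$ is the $\langle\cdot,\cdot\rangle_\alpha$-orthogonal projection onto constants. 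Hence on $\Sigma_0$ (or after quotienting by the common kernel direction) the quadratic form is, in the $\langle\cdot,\cdot\rangle_\alpha$ inner product, $\langle (-\Delta_\alpha) v, v\rangle_\alpha - \alpha s_\alpha \langle v, v\rangle_\alpha$ for $v$ orthogonal to constants, which is positive precisely when $\lambda_1(-\Delta_\alpha) > \alpha s_\alpha$. This gives strict convexity transverse to the kernel, hence on $\Sigma_0$, and in particular uniqueness of the critical point $\bar u$.

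For (ii), properness, I would argue as follows. Since $F$ is convex on $\Sigma_0$ and $\bar u$ is a critical point, $\bar u$ is the global minimum; strict convexity on the whole ray from $\bar u$ in any direction $v \in \Sigma_0$, $v \ne 0$, means $t \mapsto F(\bar u + tv)$ is strictly convex with a minimum at $t=0$, so it is unbounded above. To upgrade ``unbounded on every ray'' to ``proper'' I would bound the Hessian from below away from a neighborhood: using that $K_i$ is bounded and the explicit form above, one shows $\mathrm{Hess}\,F \succeq c\,\mathrm{Id}$ on $\Sigma_0$ outside a large ball for some $c>0$ — this is where the uniform spectral hypothesis ``at all $r \in \mathds{R}_{>0}^N$'' is essential, since it must survive the degeneration $|u|\to\infty$ — giving $F(u) \ge F(\bar u) + \tfrac{c}{2}\mathrm{dist}(u,B)^2 - C$, hence properness. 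The main obstacle is precisely this uniform lower bound on the Hessian at infinity: the hypothesis is stated pointwise for every $r$, but one must check it yields a genuine uniform spectral gap along arbitrary escaping sequences, which is delicate because $s_\alpha$, $\Sigma^\alpha$, and $-\Delta_\alpha$ all vary with $r$ and the angle-variables $K_i$ can degenerate to the boundary of the admissible region. I expect the cleanest route is to combine the sign condition $\alpha\chi(M)\le 0$ (so that the $-\alpha s_\alpha$ term is actually $\ge 0$, making $\mathrm{Hess}\,F \succeq L$ outright) with Lemma \ref{Lemma-property of L} on the behavior of $L$; if instead $\alpha\chi(M) > 0$ the hypothesis $\lambda_1(-\Delta_\alpha) > \alpha s_\alpha$ is doing real work and the argument must be run more carefully, possibly invoking the admissible-curvature-space description (Lemma \ref{Lemma-admissible-curvature space}) to control the boundary behavior.
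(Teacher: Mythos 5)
Your overall architecture coincides with the paper's: reduce to the ``if'' direction, run the gradient flow of the $\alpha$-potential $F$, compute $\mathrm{Hess}_uF=L-\alpha s_{\alpha}\bigl(\Sigma^{\alpha}-\frac{r^{\alpha}(r^{\alpha})^T}{\|r\|_{\alpha}^{\alpha}}\bigr)$, and identify positivity of this form transverse to the kernel direction with the spectral condition $\lambda_1(-\Delta_{\alpha})>\alpha s_{\alpha}$; that part of your write-up is correct and matches equation (\ref{Hession of F}) and the diagonalization in the paper. The genuine gap is in step (ii), properness, which you explicitly leave unresolved and, more importantly, misdiagnose. You assert that one must establish a uniform lower bound $\mathrm{Hess}\,F\succeq c\,\mathrm{Id}$ outside a large ball, i.e.\ a spectral gap that survives as $|u|\to\infty$, and you flag this as delicate. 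No such uniform bound is needed, and your proposed route to it would likely fail (even your fallback $\mathrm{Hess}\,F\succeq L$ for $\alpha\chi(M)\le 0$ does not give a uniform gap, since the positive eigenvalues of $L$ are not bounded below along degenerating sequences). The paper's Lemma \ref{Lemma-proper} shows that \emph{pointwise} strict positive definiteness of the Hessian on $\mathscr{U}$ (supplied by Lemma \ref{Lemma-positive-definite}) together with the existence of one critical point already implies $F(u)\ge A\|u\|+C$ on $\mathscr{U}$: for each unit direction $\omega$ the function $t\mapsto F(u^*+t\omega)$ is strictly convex with vanishing derivative at $t=0$, hence its derivative at $t=1$ is positive and bounds the derivative for all $t\ge 1$ from below; the infimum of these derivatives over the compact unit sphere is positive by continuity, giving linear growth and properness with no control on the Hessian at infinity.

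A secondary point: your concern that ``the angle-variables $K_i$ can degenerate to the boundary of the admissible region'' is vacuous here. For Thurston's circle packings with $\Phi\in[0,\frac{\pi}{2}]$ every $r\in\mathds{R}^N_{>0}$ is admissible, so $F$ is smooth on all of $\mathds{R}^N$ in the $u$-coordinates and there is no boundary to degenerate to (that issue is real only in the three-dimensional setting of Section \ref{3-dimensional combinatorial Yamabe problem}). Once properness and uniqueness of the critical point are in hand, your $\omega$-limit-set argument for convergence is fine and is interchangeable with the paper's argument via $\varphi'\le 0$, $\varphi''\ge 0$.
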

\begin{proof}
We give a direct and self-contained proof here. We just need to prove the ``if" part. Set $\Lambda_{\alpha}=\Sigma^{-\frac{\alpha}{2}}L\Sigma^{-\frac{\alpha}{2}}$, then
$$\Delta_{\alpha}=-\Sigma^{-\frac{\alpha}{2}}\Lambda_{\alpha}\Sigma^{\frac{\alpha}{2}},$$
which implies that $\lambda_1(-\Delta_{\alpha})=\lambda_1(\Lambda_{\alpha})$. Assuming $r^*$ is a constant $\alpha$-curvature metric.
Scaling $r^*$ to any $tr^*$ with $t>0$, the corresponding $\alpha$-curvature is still a constant. Hence we may suppose $r^*$ belongs to the hypersurface $\prod r_i=1$. Denote $u^*$ as the $u$-coordinate of $r^*$, and $\alpha$-potential as
\begin{equation}\label{Def-alpha-potential}
F(u)=\int_{u^*}^u\sum_{i=1}^N\left(K_i-s_{\alpha}r_i^{\alpha}\right)du_i.
\end{equation}
Denote $r^{\alpha}=(r_1^{\alpha},\cdots,r_N^{\alpha})^T$, then it's easy to calculate
\begin{equation}\label{Hession of F}
\begin{aligned}
Hess_uF
=L-\alpha s_{\alpha}
\left(\Sigma^{\alpha}-\frac{r^{\alpha}(r^{\alpha})^T}{\|r\|_{\alpha}^{\alpha}}\right)=\Sigma^{\frac{\alpha}{2}}
\left(\Lambda_{\alpha}-\alpha s_{\alpha}\left(I-\frac{r^{\frac{\alpha}{2}}(r^{\frac{\alpha}{2}})^T}{\|r\|^{\alpha}_{\alpha}}\right)\right)
\Sigma^{\frac{\alpha}{2}}.
\end{aligned}
\end{equation}
Choose an orthonormal matrix $P$ such that $P^T\Lambda_{\alpha} P=diag\{0,\lambda_1(\Lambda_{\alpha}),\cdots,\lambda_{N-1}(\Lambda_{\alpha})\}$.
Suppose $P=(e_0,e_1,\cdots,e_{N-1})$,
where $e_i$ is the $(i+1)$-column of $P$.
Then $\Lambda_{\alpha} e_0=0$ and $\Lambda_{\alpha} e_i=\lambda_i e_i,\,1\leq i\leq N-1$,
which implies $e_0=r^\frac{\alpha}{2}/\|r^\frac{\alpha}{2}\|$ and $r^\frac{\alpha}{2}\perp e_i,\,1\leq i\leq N-1$.
Hence $\big(I-\frac{r^{\frac{\alpha}{2}}(r^{\frac{\alpha}{2}})^T}{\|r\|^{\alpha}_{\alpha}}\big)e_0=0$ and $\big(I-\frac{r^{\frac{\alpha}{2}}(r^{\frac{\alpha}{2}})^T}{\|r\|^{\alpha}_{\alpha}}\big)e_i=e_i$, $1\leq i\leq N-1$,
which implies
$$Hess_uF=\Sigma^{\frac{\alpha}{2}}P diag\big\{0,\lambda_1(\Lambda_{\alpha})-\alpha s_{\alpha},\cdots,\lambda_{N-1}(\Lambda_{\alpha})-\alpha s_{\alpha}\big\}\Sigma^{\frac{\alpha}{2}}P^T.$$
If $\lambda_1(\Lambda_{\alpha})>\alpha s_{\alpha}=\alpha\frac{2\pi\chi(M)}{\|r\|_{\alpha}^{\alpha}}$, then $HessF\geq0$, $rank(F)=N-1$. Using Lemma \ref{Lemma-positive-definite}, Lemma \ref{Lemma-proper} and Lemma \ref{Lemma-injective}, we know that $F$ is proper on $\mathscr{U}$, where $\mathscr{U}\triangleq\{u\in \mathds{R}^N|\sum_i u_i=0\}$. Moreover, $\lim\limits_{u\in\mathscr{U},\,u\rightarrow \infty}F(u)=+\infty$ and $u^*$ is the unique zero point of $\nabla F$ which is also the unique minimum point of $F$ on $\mathscr{U}$. Let $\varphi(t)=F(u(t))$, then $\varphi'(t)=-\sum_i\left(K_i-s_{\alpha}r_i^{\alpha}\right)^2=-\|\nabla F\|^2\leq0$, which implies that $u(t)$ lies in a compact region of $\mathscr{U}$. We can further get $\varphi''(t)=2(K-s_{\alpha}r^{\alpha})^THessF(K-s_{\alpha}r^{\alpha})\geq0$. The fact that $\varphi'\leq0$, $\varphi''\geq0$ and $\varphi$ is bounded below implies $\varphi'(+\infty)=0$. Combining $u^*$ is the unique zero point of $\nabla F$ and $\{u(t)\}\subset\subset\mathscr{U}$, we know $u(t)\rightarrow u^*$ exponentially fast.
\qed\\
\end{proof}

If $\alpha\chi(M)\leq 0$, we always have $\lambda_1(-\Delta_{\alpha})>0\geq \alpha s_{\alpha}=\alpha\frac{2\pi\chi(M)}{||r||^{\alpha}_{\alpha}}$, thus we have
\begin{theorem}\label{Thm-convergence-negative Euler}
Suppose $(M, \mathcal{T}, \Phi)$ is a weighted triangulated surface with $\alpha\chi(M)\leq 0$.
Then the solution to the $\alpha$-flow (\ref{Def-Yamabe flow-2d}) converges if and only if there exists a constant $\alpha$-curvature metric $r^*$. Furthermore, if the solution converges, it converges exponentially fast to the metric of constant curvature.
\end{theorem}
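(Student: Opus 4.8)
The plan is to obtain this statement as an immediate corollary of Theorem~\ref{Thm-convergence-nonpositive Euler number} together with Proposition~\ref{Prop-converge imply CCCP-metric}. Indeed, the only thing that needs to be checked is that the eigenvalue hypothesis $\lambda_1(-\Delta_\alpha)>\alpha s_\alpha$ appearing in Theorem~\ref{Thm-convergence-nonpositive Euler number} is automatically satisfied at every $r\in\mathds{R}_{>0}^N$ once $\alpha\chi(M)\le 0$; everything else then transfers verbatim.

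First I would control the sign of $\alpha s_\alpha$. By definition $s_\alpha=2\pi\chi(M)/\|r\|_\alpha^\alpha$ and $\|r\|_\alpha^\alpha=\sum_i r_i^\alpha>0$, so $\alpha s_\alpha=2\pi\alpha\chi(M)/\|r\|_\alpha^\alpha$ has the same sign as $\alpha\chi(M)$. Under the hypothesis $\alpha\chi(M)\le 0$ this gives $\alpha s_\alpha\le 0$ uniformly, for every circle packing metric $r$.

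Next I would verify $\lambda_1(-\Delta_\alpha)>0$ at every $r$. Writing $\Lambda_\alpha=\Sigma^{-\frac{\alpha}{2}}L\Sigma^{-\frac{\alpha}{2}}$ as in the proof of Theorem~\ref{Thm-convergence-nonpositive Euler number}, we have $-\Delta_\alpha=-\Sigma^{-\frac{\alpha}{2}}\Lambda_\alpha\Sigma^{\frac{\alpha}{2}}$, so $-\Delta_\alpha$ is conjugate to $\Lambda_\alpha$ and has the same spectrum. Since $L$ is positive semi-definite with $\mathrm{rank}(L)=N-1$ (Chow--Luo), the symmetric matrix $\Lambda_\alpha$ is positive semi-definite with $\mathrm{rank}(\Lambda_\alpha)=N-1$ and one-dimensional kernel spanned by $r^{\frac{\alpha}{2}}$; hence its smallest nonzero eigenvalue $\lambda_1(\Lambda_\alpha)=\lambda_1(-\Delta_\alpha)$ is strictly positive. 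Combining this with the previous paragraph yields $\lambda_1(-\Delta_\alpha)>0\ge \alpha s_\alpha$ at all $r\in\mathds{R}_{>0}^N$, which is precisely the hypothesis of Theorem~\ref{Thm-convergence-nonpositive Euler number}.

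Finally I would invoke Theorem~\ref{Thm-convergence-nonpositive Euler number}: it gives that if a constant $\alpha$-curvature metric $r^*$ exists then the $\alpha$-flow (\ref{Def-Yamabe flow-2d}) converges, and the argument there (properness of the $\alpha$-potential $F$ on $\mathscr{U}$, together with $\varphi'(t)\le 0$, $\varphi''(t)\ge 0$, $\varphi$ bounded below) in fact yields $u(t)\to u^*$ exponentially fast, i.e. convergence to the constant curvature metric at an exponential rate. The reverse implication, that convergence of the flow forces the existence of a constant $\alpha$-curvature metric, is exactly Proposition~\ref{Prop-converge imply CCCP-metric}, since constant $\alpha$-curvature metrics are the equilibria of (\ref{Def-Yamabe flow-2d}). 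There is no real obstacle in this argument: all of the analytic difficulty was already absorbed into Theorem~\ref{Thm-convergence-nonpositive Euler number}, and the only point deserving even minimal care is the metric-independent sign bound $\alpha s_\alpha\le 0$, which is where the topological-sign hypothesis $\alpha\chi(M)\le 0$ is used.
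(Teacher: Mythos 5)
Your proposal is correct and is exactly the paper's own argument: the authors derive this theorem in one line by observing that $\alpha\chi(M)\le 0$ forces $\lambda_1(-\Delta_\alpha)>0\ge\alpha s_\alpha$ at every $r$, so Theorem~\ref{Thm-convergence-nonpositive Euler number} applies, with the converse direction given by Proposition~\ref{Prop-converge imply CCCP-metric}. The only blemish is a harmless sign slip where you write $-\Delta_\alpha=-\Sigma^{-\frac{\alpha}{2}}\Lambda_\alpha\Sigma^{\frac{\alpha}{2}}$; the correct identity is $-\Delta_\alpha=\Sigma^{-\frac{\alpha}{2}}\Lambda_\alpha\Sigma^{\frac{\alpha}{2}}$, which is what your spectral conclusion actually uses.
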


\begin{remark}
When $\alpha=0$, above theorem is obtained in \cite{CL1}.
\end{remark}

Next we want to study the combinatorial and topological conditions for the existence of constant $\alpha$-curvature metric. For any proper subset $I\subset V$, denote
$$\mathscr{Y}_I\triangleq\{x\in \mathds{R}^N |\sum_{i\in I}x_i >-\sum_{(e,v)\in Lk(I)}(\pi-\Phi(e))+2\pi\chi(F_I)\}$$
and $\mathscr{K}_{GB}\triangleq\{x\in \mathds{R}^N|\sum_{i=1}^Nx_i=2\pi\chi(X)\}$. The authors once proved
\begin{proposition}\label{Proposition-topo-com-condition-ofGeXu}
(\cite{GX4}, Theorem 2.34)
Given a weighted triangulated surface $(M, \mathcal{T}, \Phi)$. Consider Thurston's circle packing metric and the $\alpha$-curvature. When $\alpha>0$ and $\chi(M)<0$, the existence of constant $\alpha$-curvature metric is equivalent to $\mathscr{K}_{GB} \cap (\mathop{\cap}_{\phi\neq I \subsetneqq V} \mathscr{Y}_I )\cap\mathds{R}^N_{<0}\neq\phi$; When $\alpha<0$ and $\chi(M)>0$, the existence of constant $\alpha$-curvature metric is equivalent to $\mathscr{K}_{GB} \cap (\mathop{\cap}_{\phi\neq I \subsetneqq V} \mathscr{Y}_I )\cap\mathds{R}^N_{>0}\neq\phi$; When $\chi(M)=0$, the existence of constant $\alpha$-curvature metric is equivalent to $\mathscr{K}_{GB} \cap (\mathop{\cap}_{\phi\neq I \subsetneqq V} \mathscr{Y}_I )\cap \{0\}\neq\phi$.
\end{proposition}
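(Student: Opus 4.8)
The plan is to trade the scaling‐invariant flow picture for a \emph{prescribed‐curvature} variational problem with a strictly convex functional, and then read off the combinatorial conditions from that functional's recession behaviour together with Lemma~\ref{Lemma-admissible-curvature space}. I treat $\alpha>0$, $\chi(M)<0$ first (the case $\alpha<0$, $\chi(M)>0$ being symmetric, and $\chi(M)=0$ separate and easy, see below). Summing $K_i=c\,r_i^{\alpha}$ and using Gauss--Bonnet forces $c=s_{\alpha}(r)<0$; conversely, if $K_i=-s\,r_i^{\alpha}$ for a \emph{fixed} $s>0$ then automatically $s\|r\|_{\alpha}^{\alpha}=-2\pi\chi(M)$, so $-s=s_{\alpha}(r)=R_{\alpha,i}(r)$ for all $i$. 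Hence, fixing any $s>0$, a constant $\alpha$-curvature metric exists if and only if
\[
G_s(u)=\int_{u_0}^{u}\sum_{i=1}^N\big(K_i+s\,r_i^{\alpha}\big)\,du_i
\]
has a critical point (the forward implication uses that scaling $r$ adjusts the value $-s_\alpha(r)$ to any prescribed $s$, since $\alpha\neq 0$). Now $\nabla G_s=K+s\,r^{\alpha}$ and $Hess\,G_s=L+s\alpha\,\Sigma^{\alpha}$ with $L\ge 0$ and $s\alpha\,\Sigma^{\alpha}>0$, so $G_s$ is strictly convex on $\mathds R^N$ with everywhere positive-definite Hessian; for such a functional ``has a critical point'' $=$ ``attains its minimum'' $=$ ``is coercive'' (along a ray on which $G_s$ stayed bounded its asymptotic slope would vanish, forcing $G_s$ to be affine there and contradicting $Hess\,G_s>0$).

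It therefore remains to characterise coercivity of $G_s$. Writing $W(u)=\int_{u_0}^{u}\sum_i K_i\,du_i$ for the convex potential ($\nabla W=K$, $Hess\,W=L\ge 0$), one has $G_s=W+\tfrac{s}{\alpha}\sum_i e^{\alpha u_i}+\mathrm{const}$, hence
\[
G_{s,\infty}(\eta)=\lim_{t\to\infty}\tfrac{1}{t}G_s(u_0+t\eta)=W_{\infty}(\eta)+\lim_{t\to\infty}\tfrac{s}{\alpha t}\sum_i e^{\alpha(u_{0,i}+t\eta_i)} .
\]
For $\alpha>0$ the second limit is $+\infty$ whenever $\eta$ has a positive coordinate and $0$ when $\eta\le 0$, so $G_s$ is coercive iff $W_{\infty}(\eta)>0$ for every $\eta\le 0$, $\eta\ne0$. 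By Lemma~\ref{Lemma-admissible-curvature space}, $\mathrm{range}(K)=\mathscr A:=\mathscr K_{GB}\cap\big(\bigcap_{\phi\ne I\subsetneqq V}\mathscr Y_I\big)$, which is bounded since $(2-d)\pi\le K_i\le 2\pi$; and since $W$ is convex with $\nabla W=K$, its recession function is the support function of its range, $W_{\infty}(\eta)=\sup_{x\in\mathscr A}\langle x,\eta\rangle=\max_{x\in\overline{\mathscr A}}\langle x,\eta\rangle$. So everything reduces to the statement: $\max_{x\in\overline{\mathscr A}}\langle x,\eta\rangle>0$ for all $\eta\le0$, $\eta\ne0$, if and only if $\mathscr A\cap\mathds R^N_{<0}\ne\phi$.

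This last equivalence is a separation argument. If $\bar x\in\mathscr A\cap\mathds R^N_{<0}$ and $\eta\le0$, $\eta\ne0$, then $\langle\bar x,\eta\rangle=\sum_i\bar x_i\eta_i>0$, so $W_{\infty}(\eta)>0$. Conversely, if $\mathscr A\cap\mathds R^N_{<0}=\phi$, separate these disjoint convex sets: pick $\xi\ne0$ with $\langle\xi,x\rangle\ge c\ge\langle\xi,y\rangle$ for all $x\in\mathscr A$, $y\in\mathds R^N_{<0}$; sending coordinates of $y$ to $-\infty$ forces $\xi\ge0$, letting $y\to0^{-}$ gives $c\ge0$, hence $\langle\xi,x\rangle\ge0$ on $\mathscr A$, i.e.\ $W_{\infty}(-\xi)=\sup_{\mathscr A}\langle\cdot,-\xi\rangle\le0$ with $-\xi\le0$, $-\xi\ne0$, contradicting coercivity. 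For $\alpha<0$, $\chi(M)>0$ run the same argument with $+s\,r^{\alpha}$ replaced by $-s\,r^{\alpha}$ (so $Hess\,G_s=L-s\alpha\,\Sigma^{\alpha}>0$ since now $\alpha<0$) and $\mathds R^N_{<0}$ by $\mathds R^N_{>0}$. Finally, $\chi(M)=0$ needs no convexity: then $s_{\alpha}\equiv0$, so a constant $\alpha$-curvature metric is one with $K\equiv0$, and by Lemma~\ref{Lemma-admissible-curvature space} such a metric exists iff $0\in\mathscr A$, i.e.\ iff $\mathscr K_{GB}\cap\big(\bigcap_I\mathscr Y_I\big)\cap\{0\}\ne\phi$ (note $0\in\mathscr K_{GB}$ automatically).

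The one substantive input — and where I expect the real difficulty to sit — is the identification $W_{\infty}(\eta)=\sup_{x\in\mathscr A}\langle x,\eta\rangle$: it uses not the easy inclusion $\mathrm{range}(K)\subseteq\mathscr A$ but the surjectivity half of Lemma~\ref{Lemma-admissible-curvature space}, which is precisely Thurston's asymptotic analysis of how the inner angles $\theta_i^{jk}$ degenerate as some circle radii tend to $0$ or $\infty$. Everything downstream is soft convex analysis (recession functions and separation of convex sets); upstream one needs only that $K$ (hence $W$ and $G_s$) is finite and bounded on all of $\mathds R^N$ and that $L\ge0$.
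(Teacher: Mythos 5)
Your argument is correct, but it is not the paper's argument: the paper does not prove Proposition~\ref{Proposition-topo-com-condition-ofGeXu} at all --- it imports it from \cite{GX4} (Theorem 2.34), and the remark following Theorem~\ref{Thm-combtopo-condition-CCCPmetric} records that the proof there relies on a discrete maximum principle for the flow (\ref{Def-new Ricci flow}) and explicitly asks for a more direct proof. Your route is exactly such a direct proof: you convert the constant-curvature problem into the prescribed-curvature equation $K=-s\,r^{\alpha}$ for one fixed $s>0$ (legitimate, since scaling adjusts the constant whenever $\alpha\neq 0$), observe that the associated potential $G_s$ has Hessian $L+s\alpha\Sigma^{\alpha}>0$, reduce existence of a critical point to coercivity, identify the recession function of $G_s$ as the support function of $\mathrm{range}(K)=\mathscr{A}$ plus the contribution of $\tfrac{s}{\alpha}\sum_i e^{\alpha u_i}$ (which is $+\infty$ off the closed negative orthant and $0$ on it), and finish by separating the convex set $\mathscr{A}$ from the open orthant. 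The only genuinely hard input is the surjectivity half of Lemma~\ref{Lemma-admissible-curvature space}, which the flow proof needs as well. What the maximum-principle/flow proof buys is dynamical information about (\ref{Def-new Ricci flow}); what yours buys is a short, static, self-contained existence argument that answers the question raised in that remark.

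One micro-justification should be repaired: a convex function bounded above on a ray is \emph{not} forced to be affine there (e.g.\ $t\mapsto 1/(1+t)$ is strictly convex, decreasing and bounded on $[0,\infty)$), so your parenthetical does not prove the equivalence ``critical point $\Leftrightarrow$ coercive''. The equivalence is nevertheless true for a $C^2$ function with everywhere positive definite Hessian: the nontrivial direction is precisely Lemma~\ref{Lemma-proper} (starting from the critical point, the slope at time $1$ along each unit ray is strictly positive and, by compactness of the sphere, uniformly so), and the converse is immediate. Substituting that reference for the parenthetical closes the gap; the remaining steps (additivity of recession functions, recession function of a convex potential equals the support function of its gradient range, separation of disjoint convex sets one of which is open) are standard and correctly deployed, including the symmetric case $\alpha<0$, $\chi(M)>0$ and the elementary case $\chi(M)=0$.
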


Similar to Thurston's condition (\ref{condition-Thurston}), conditions in Proposition (\ref{Proposition-topo-com-condition-ofGeXu}) also show that the combinatorial structure of the triangulation and the topology of surface, which have no relation with circle packing metrics, contains $\alpha$-curvature information surprisingly. Using Proposition \ref{Proposition-topo-com-condition-ofGeXu}, we can derive a combinatorial-topological-metric condition which contains Thurston's condition (\ref{condition-Thurston}) as a special case.

\begin{theorem}\label{Thm-combtopo-condition-CCCPmetric}
Suppose $(M, \mathcal{T}, \Phi)$ is a weighted triangulated surface with $\alpha\chi(M)\leq 0$.
Then there exists a constant $\alpha$-curvature metric iff. there exists a circle packing metric $r^*$ such that for any nonempty proper subset $I$ of vertices $V$,
\begin{equation}\label{combtopo-condition of GX}
2\pi\chi(M)\frac{\sum_{i\in I}r_i^{*\alpha}}{\|r^*\|^{\alpha}_{\alpha}}>-\sum_{(e,v)\in Lk(I)}(\pi-\Phi(e))+2\pi\chi(F_I).
\end{equation}
\end{theorem}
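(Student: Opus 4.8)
The plan is to deduce the theorem from Proposition \ref{Proposition-topo-com-condition-ofGeXu} by re-reading condition (\ref{combtopo-condition of GX}) as the membership of an explicit point of $\mathds{R}^N$ in the convex regions occurring there. Given a circle packing metric $r$, set
\begin{equation*}
x_i(r)=2\pi\chi(M)\,\frac{r_i^{\alpha}}{\|r\|_{\alpha}^{\alpha}},\qquad i=1,\dots,N .
\end{equation*}
Then $\sum_{i=1}^{N}x_i(r)=2\pi\chi(M)$, so $x(r)\in\mathscr{K}_{GB}$; for every nonempty proper subset $I$ of $V$ the inequality (\ref{combtopo-condition of GX}) written for $r$ is literally the statement $x(r)\in\mathscr{Y}_I$; and each coordinate $x_i(r)$ carries the sign of $\chi(M)$. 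Hence the existence of a circle packing metric $r^{*}$ satisfying (\ref{combtopo-condition of GX}) for all such $I$ is exactly the statement that the image of the map $r\mapsto x(r)$ meets $\mathscr{K}_{GB}\cap\big(\bigcap_{I}\mathscr{Y}_I\big)$, and it remains to identify that image in each of the cases of Proposition \ref{Proposition-topo-com-condition-ofGeXu}.

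Suppose first $\alpha>0$ and $\chi(M)<0$. Then $x(r)\in\mathds{R}^{N}_{<0}$. Conversely, given any $x\in\mathscr{K}_{GB}\cap\mathds{R}^{N}_{<0}$, pick any constant $c<0$ and put $r_i=(c\,x_i)^{1/\alpha}$; since $x_i<0$ and $c<0$ we have $c\,x_i>0$, so $r$ is a genuine circle packing metric, and using $\|r\|_{\alpha}^{\alpha}=\sum_i r_i^{\alpha}=c\sum_i x_i=2\pi\chi(M)\,c$ one checks that $x(r)=x$. Therefore the image of $r\mapsto x(r)$ is exactly $\mathscr{K}_{GB}\cap\mathds{R}^{N}_{<0}$, so by Proposition \ref{Proposition-topo-com-condition-ofGeXu} the existence of a constant $\alpha$-curvature metric is equivalent to the existence of $r^{*}$ satisfying (\ref{combtopo-condition of GX}). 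The case $\alpha<0$, $\chi(M)>0$ is identical with $\mathds{R}^{N}_{<0}$ replaced by $\mathds{R}^{N}_{>0}$: now $x_i(r)>0$, one takes $c>0$, and $r_i=(c\,x_i)^{1/\alpha}>0$ because $\alpha\neq0$.

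When $\chi(M)=0$ (and $\alpha$ arbitrary) the left side of (\ref{combtopo-condition of GX}) vanishes, so the inequality becomes $0>-\sum_{(e,v)\in Lk(I)}(\pi-\Phi(e))+2\pi\chi(F_I)$, which does not involve $r^{*}$ and is exactly the condition $\mathscr{K}_{GB}\cap\big(\bigcap_{I}\mathscr{Y}_I\big)\cap\{0\}\neq\emptyset$, so Proposition \ref{Proposition-topo-com-condition-ofGeXu} applies directly. Finally, in the degenerate case $\alpha=0$, $\chi(M)\neq0$, which is not covered by Proposition \ref{Proposition-topo-com-condition-ofGeXu}, the $0$-curvature is the classical $K$; since $\|r\|_{\alpha}^{\alpha}=\sum_i r_i^{0}=N$ and $\sum_{i\in I}r_i^{0}=|I|$, condition (\ref{combtopo-condition of GX}) reduces to Thurston's condition (\ref{condition-Thurston}) and is again independent of $r^{*}$, so the equivalence is Thurston's theorem recalled in the introduction.

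The only step needing genuine care — the one I would write out in detail — is the surjectivity of the change of variables $r\mapsto x(r)$ onto the relevant coordinate half-space inside $\mathscr{K}_{GB}$, since this is what upgrades the abstract nonemptiness in Proposition \ref{Proposition-topo-com-condition-ofGeXu} to the existence of an honest circle packing metric $r^{*}$. This surjectivity rests precisely on $\alpha\neq0$, so that $t\mapsto t^{1/\alpha}$ is well defined on $(0,\infty)$, together with the hypothesis $\alpha\chi(M)\leq0$, which forces $c\,x_i>0$ and thus keeps $r$ in the positive orthant. Everything else is routine bookkeeping across the cases already separated out by Proposition \ref{Proposition-topo-com-condition-ofGeXu}.
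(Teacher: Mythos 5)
Your proposal is correct and follows the same essential strategy as the paper: read condition (\ref{combtopo-condition of GX}) as the statement that the point $x(r^*)$ with $x_i(r^*)=2\pi\chi(M)r_i^{*\alpha}/\|r^*\|_\alpha^\alpha$ lies in $\mathscr{K}_{GB}\cap\bigl(\cap_I\mathscr{Y}_I\bigr)$ together with the appropriate orthant, and then invoke Proposition \ref{Proposition-topo-com-condition-ofGeXu} (with Thurston's theorem covering $\alpha=0$). The one place you diverge is the necessity direction: the paper takes the constant $\alpha$-curvature metric $r^*$ itself as the witness, observing via Lemma \ref{Lemma-admissible-curvature space} that its curvature vector $K^*=x(r^*)$ automatically satisfies all the inequalities $\sum_{i\in I}K_i^*>-\sum_{(e,v)\in Lk(I)}(\pi-\Phi(e))+2\pi\chi(F_I)$, whereas you instead run Proposition \ref{Proposition-topo-com-condition-ofGeXu} backwards and recover some witness $r^*$ from your surjectivity lemma $r_i=(c\,x_i)^{1/\alpha}$. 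Both are valid; the paper's route is shorter and produces the more natural witness, while your explicit computation of the image of $r\mapsto x(r)$ makes rigorous the step the paper dismisses with ``it's easy to see'' in the sufficiency direction, and correctly isolates where the hypotheses $\alpha\neq0$ and $\alpha\chi(M)\le0$ actually enter.
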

\textbf{Proof.}
By Lemma \ref{Lemma-admissible-curvature space}, for any circle packing metric $r$ and any proper subset $I\subset V$, the classical combinatorial Gauss curvature $K$ satisfies
\begin{equation}\label{K-satisfy}
\sum\limits_{i\in I}K_i(r)>-\sum_{(e,v)\in Lk(I)}(\pi-\Phi(e))+2\pi\chi(F_I).
\end{equation}
If there exists a constant $\alpha$-curvature metric $r^*$, substituting $K^*=K(r^*)=(K_1^*,\cdots,K_N^*)$ into (\ref{K-satisfy}), where $K_i^*=\frac{2\pi\chi(M)}{||r^*||^{\alpha}_{\alpha}}r_i^{*\alpha}$, we get (\ref{combtopo-condition of GX}). On the other hand, it's easy to see,

$\bullet$ when $\alpha>0$ and $\chi(M)<0$, (\ref{combtopo-condition of GX}) implies that $\mathscr{K}_{GB} \cap (\mathop{\cap}_{\phi\neq I \subsetneqq V} \mathscr{Y}_I )\cap\mathds{R}^N_{<0}\neq\phi$;

$\bullet$ when $\alpha<0$ and $\chi(M)>0$, (\ref{combtopo-condition of GX}) implies that $\mathscr{K}_{GB} \cap (\mathop{\cap}_{\phi\neq I \subsetneqq V} \mathscr{Y}_I )\cap\mathds{R}^N_{>0}\neq\phi$;

$\bullet$ when $\chi(M)=0$, (\ref{combtopo-condition of GX}) implies that $0\in\mathscr{K}_{GB} \cap (\mathop{\cap} _{\phi\neq I \subsetneqq V} \mathscr{Y}_I )$.

Proposition \ref{Proposition-topo-com-condition-ofGeXu} shows that conditions in above three cases all implies the existence of constant $\alpha$-curvature metrics. For $\alpha=0$ case, (\ref{combtopo-condition of GX}) is in fact Thurston's condition (\ref{condition-Thurston}). Thus we finish the proof.
 \qed\\
\begin{remark}
The proof of Proposition \ref{Proposition-topo-com-condition-ofGeXu} and hence Theorem \ref{Thm-combtopo-condition-CCCPmetric} deeply rely on deriving a discrete maximum principle for the flow (\ref{Def-new Ricci flow}). We want to know if there are more direct proofs without using discrete maximum principle for the flow (\ref{Def-new Ricci flow}).
\end{remark}

As to the uniqueness of constant $\alpha$-curvature metric, we restate Theorem 2.33 \cite{GX4} here for completeness.
\begin{theorem}\label{Thm-uniqueness-CCCPmetric}
Suppose $(M, \mathcal{T}, \Phi)$ is a weighted triangulated surface with $\alpha\chi(M)\leq 0$, then the constant $\alpha$-curvature metric is unique if it exists. Specificly, if $\alpha\chi(M)=0$, then there exists at most one constant $\alpha$-curvature metric up to scaling. If $\alpha\chi(M)<0$, then for any $c^*$, there exists at most one metric with $\alpha$-curvature $R_{\alpha,i}\equiv c^*$.
\end{theorem}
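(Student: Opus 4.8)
The plan is to deduce uniqueness from the strict convexity, modulo scaling, of the $\alpha$-potential used to run the flow. Recall from the proof of Proposition~\ref{Prop-negative-gradient-flow} that
\[
F(u)=\int_{u_0}^u\sum_{i=1}^N\bigl(K_i-s_\alpha r_i^\alpha\bigr)\,du_i
\]
is a well-defined $C^2$ function on $\mathds{R}^N$ (the integrand is closed and $K_i$ depends smoothly on $u$ since for $\Phi\in[0,\frac\pi2]$ all triangles stay nondegenerate), and that its critical points are exactly the logarithmic coordinates of constant $\alpha$-curvature metrics (these are the zeros of $K_i-s_\alpha r_i^\alpha$, i.e. the critical points of the flow~(\ref{Def-Yamabe flow-2d})). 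So it suffices to show that $F$ descends to a strictly convex function on $\mathds{R}^N/\mathds{R}\mathbf{1}$, where $\mathbf{1}=(1,\dots,1)^T$; a strictly convex $C^1$ function has at most one critical point, which gives uniqueness up to scaling.

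First I would show that under the hypothesis $\alpha\chi(M)\le0$ the Hessian \eqref{Hession of F},
\[
Hess_uF=L-\alpha s_\alpha\Bigl(\Sigma^\alpha-\tfrac{r^\alpha(r^\alpha)^T}{\|r\|_\alpha^\alpha}\Bigr),
\]
is positive semidefinite at every $r\in\mathds{R}_{>0}^N$ with kernel exactly $\mathds{R}\mathbf{1}$. Since $\alpha\chi(M)\le0$, we have $-\alpha s_\alpha=-2\pi\alpha\chi(M)/\|r\|_\alpha^\alpha\ge0$, and for every $v\in\mathds{R}^N$,
\[
v^T\Bigl(\Sigma^\alpha-\tfrac{r^\alpha(r^\alpha)^T}{\|r\|_\alpha^\alpha}\Bigr)v=\sum_{i=1}^N r_i^\alpha v_i^2-\frac{\bigl(\sum_{i=1}^N r_i^\alpha v_i\bigr)^2}{\sum_{i=1}^N r_i^\alpha}\ge0
\]
by Cauchy--Schwarz with the positive weights $r_i^\alpha$, with equality iff $v\in\mathds{R}\mathbf{1}$. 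Combined with Chow and Luo's fact (Lemma~\ref{Lemma-property of L}) that $L\succeq0$ with $\ker L=\mathds{R}\mathbf{1}$, and using that for positive semidefinite matrices $\ker(A+B)=\ker A\cap\ker B$, we obtain $Hess_uF\succeq0$ and $\ker(Hess_uF)=\mathds{R}\mathbf{1}$.

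Next, by the discrete Gauss--Bonnet formula, $\tfrac{d}{dc}F(u+c\mathbf{1})=\sum_i(K_i-s_\alpha r_i^\alpha)=2\pi\chi(M)-2\pi\chi(M)=0$, so $F$ is invariant under translation by $\mathds{R}\mathbf{1}$ and descends to a $C^2$ function $\bar F$ on $\mathds{R}^N/\mathds{R}\mathbf{1}\cong\mathscr{U}$; by the previous step its Hessian is positive definite everywhere, so $\bar F$ is strictly convex and has at most one critical point, namely the constant $\alpha$-curvature metric up to scaling. This already settles the $\alpha\chi(M)=0$ statement. For the sharper claim when $\alpha\chi(M)<0$: if $r$ and $\tilde r$ both satisfy $R_{\alpha,i}\equiv c^*$, then both are constant $\alpha$-curvature metrics, so $\tilde r=t r$ for some $t>0$; since $K_i$ is scaling invariant, $R_{\alpha,i}(tr)=K_i(r)/(tr_i)^\alpha=t^{-\alpha}R_{\alpha,i}(r)$, whence $c^*=t^{-\alpha}c^*$. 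Summing $K_i=c^*r_i^\alpha$ gives $c^*=2\pi\chi(M)/\|r\|_\alpha^\alpha$, which is nonzero because $\alpha\chi(M)<0$ forces $\chi(M)\ne0$; and $\alpha\ne0$. Hence $t^{-\alpha}=1$ forces $t=1$, i.e. $\tilde r=r$.

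I do not expect a genuine obstacle. The substantive input — semidefiniteness of $Hess_uF$ with one-dimensional kernel — is already essentially contained in the proof of Theorem~\ref{Thm-convergence-nonpositive Euler number} and rests on Chow and Luo's analysis of $L$ together with the elementary Cauchy--Schwarz estimate for the rank-one correction term. The only thing to be careful about is bookkeeping the two distinct notions of uniqueness (up to scaling, versus for a prescribed constant value $c^*$) and using the scaling law $R_{\alpha,i}(tr)=t^{-\alpha}R_{\alpha,i}(r)$ correctly to remove the scaling ambiguity in the case $\alpha\chi(M)<0$.
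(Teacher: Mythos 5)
Your argument is correct and complete. Note that the paper itself does not prove this theorem --- it is restated from Theorem 2.33 of \cite{GX4} ``for completeness'' with no proof supplied --- so there is nothing in the text to compare against line by line; what you have done is reassemble a self-contained proof from ingredients the paper already provides: the Hessian formula (\ref{Hession of F}) from the proof of Theorem \ref{Thm-convergence-nonpositive Euler number}, Lemma \ref{Lemma-property of L} for $L\succeq 0$ with kernel $\mathds{R}\mathbf{1}$, and Lemma \ref{Lemma-positive-definite} / Lemma \ref{Lemma-injective} to pass from a rank-$(N-1)$ semidefinite Hessian to strict convexity on $\mathscr{U}$ and injectivity of the gradient. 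The two points that actually require checking --- that the weighted Cauchy--Schwarz correction term $\Sigma^{\alpha}-r^{\alpha}(r^{\alpha})^T/\|r\|_{\alpha}^{\alpha}$ is positive semidefinite with kernel exactly $\mathds{R}\mathbf{1}$ (so that $\ker(Hess_uF)=\mathds{R}\mathbf{1}$ in both the $\alpha s_{\alpha}<0$ and $\alpha s_{\alpha}=0$ cases), and the scaling bookkeeping $R_{\alpha}(tr)=t^{-\alpha}R_{\alpha}(r)$ together with $c^*=2\pi\chi(M)/\|r\|_{\alpha}^{\alpha}\neq 0$ to upgrade ``unique up to scaling'' to ``unique'' when $\alpha\chi(M)<0$ --- are both handled correctly. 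This is the standard convexity/rigidity route and is consistent with how such uniqueness statements are proved in the cited literature; no gap.
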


When $\alpha\chi(M)>0$, such as $\alpha=2$ and $M$ is a sphere, Example 2 in \cite{GX4} shows that the conclusions in Theorem \ref{Thm-convergence-negative Euler} are not true. For the tetrahedron triangulation of the sphere, if the initial metric $r(0)$ is close enough to $r^*=(1,\cdots,1)^T$, then the solution to flow (\ref{Def-Yamabe flow-2d}) converges to $r^*$ when $t\rightarrow -\infty$. However, the limit behavior of $r(t)$ depends on the selection of initial metric $r(0)$. Indeed, there exists $r(0)$ such that the solution $r(t)$ diverges to $\infty$ either $t$ tends to $+\infty$ or $-\infty$. Example 3 in \cite{GX4} shows that the constant $\alpha$-curvature metric is not unique. For the existence of constant $\alpha$-curvature, we have
\begin{corollary}\label{Thm-uniqueness-CCCPmetric}
Suppose $(M, \mathcal{T}, \Phi)$ is a weighted triangulated surface with $\alpha\chi(M)>0$. \\
(1)~There exists a constant $\alpha$-curvature metric.\\
(2)~There exists a circle packing metric $r^*$ such that for any nonempty proper subset $I$ of vertices $V$,
\begin{equation}\label{comb-topo-cond-for positive case}
2\pi\chi(M)\frac{\sum_{i\in I}r_i^{*\alpha}}{\|r^*\|^{\alpha}_{\alpha}}>-\sum_{(e,v)\in Lk(I)}(\pi-\Phi(e))+2\pi\chi(F_I).
\end{equation}
(3)~When $\alpha>0$ and $\chi(M)>0$, then $\mathscr{K}_{GB} \cap (\mathop{\cap}_{\phi\neq I \subsetneqq V} \mathscr{Y}_I )\cap\mathds{R}^N_{>0}\neq\phi$; When $\alpha<0$ and $\chi(M)<0$, then $\mathscr{K}_{GB} \cap (\mathop{\cap}_{\phi\neq I \subsetneqq V} \mathscr{Y}_I )\cap\mathds{R}^N_{<0}\neq\phi$.\\
Then (1) implies (2) which implies (3).
\end{corollary}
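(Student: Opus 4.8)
The plan is to recognize this corollary as the ``forward half'' of Theorem \ref{Thm-combtopo-condition-CCCPmetric}: the two implications $(1)\Rightarrow(2)$ and $(2)\Rightarrow(3)$ never invoke the hypothesis $\alpha\chi(M)\le 0$, so they survive verbatim when $\alpha\chi(M)>0$. I would carry this out in two short, sign-agnostic steps and then comment on why the reverse implications are genuinely out of reach in this regime.

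First, for $(1)\Rightarrow(2)$: assume $r^*$ realizes constant $\alpha$-curvature, say $R_{\alpha,i}(r^*)\equiv c^*$. Summing $K_i^*=c^*r_i^{*\alpha}$ over $i$ and using the discrete Gauss--Bonnet formula $\sum_i K_i=2\pi\chi(M)$ identifies $c^*=2\pi\chi(M)/\|r^*\|_\alpha^\alpha$, hence $K_i^*=\frac{2\pi\chi(M)}{\|r^*\|_\alpha^\alpha}r_i^{*\alpha}$. Substituting $r=r^*$ into the admissibility inequality of Lemma \ref{Lemma-admissible-curvature space}, namely $\sum_{i\in I}K_i(r)>-\sum_{(e,v)\in Lk(I)}(\pi-\Phi(e))+2\pi\chi(F_I)$ for every nonempty proper $I\subsetneqq V$, turns it into exactly (\ref{comb-topo-cond-for positive case}). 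No positivity of $K^*$ and no sign of $\chi(M)$ is used here, so this step is unconditional.

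Second, for $(2)\Rightarrow(3)$: given $r^*$ as in (2), set $x_i:=\frac{2\pi\chi(M)}{\|r^*\|_\alpha^\alpha}r_i^{*\alpha}\in\mathds{R}^N$. Then $\sum_i x_i=2\pi\chi(M)$, so $x\in\mathscr{K}_{GB}$, and (\ref{comb-topo-cond-for positive case}) is precisely the statement that $x\in\mathscr{Y}_I$ for each nonempty proper $I$. Since $r_i^{*\alpha}>0$ and $\|r^*\|_\alpha^\alpha=\sum_j r_j^{*\alpha}>0$ for every real $\alpha$, the sign of each $x_i$ equals the sign of $\chi(M)$; thus when $\alpha>0,\chi(M)>0$ we get $x\in\mathds{R}^N_{>0}$, and when $\alpha<0,\chi(M)<0$ we get $x\in\mathds{R}^N_{<0}$. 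Either way the relevant intersection contains $x$ and is nonempty.

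Finally, I would note what is \emph{not} asserted and why. The converse chain $(3)\Rightarrow(2)\Rightarrow(1)$ is the real difficulty: reconstructing a metric $r$ with $r^\alpha$ proportional to an admissible curvature vector $x$ is exactly what the discrete maximum principle underlying Proposition \ref{Proposition-topo-com-condition-ofGeXu} supplies when $\alpha\chi(M)\le 0$, and both that machinery and the conclusion itself break down when $\alpha\chi(M)>0$ (witness the tetrahedral triangulation of $S^2$ with $\alpha=2$). So the only thing to verify is that Steps 1 and 2 above use nothing beyond Gauss--Bonnet and Lemma \ref{Lemma-admissible-curvature space}, which they do; there is no substantive obstacle in the one-directional statement, the obstacle lies entirely in the direction the corollary wisely omits.
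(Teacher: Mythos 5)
Your argument is correct and coincides with the paper's own (very terse) proof: $(1)\Rightarrow(2)$ is exactly the substitution of $K^*_i=\frac{2\pi\chi(M)}{\|r^*\|_\alpha^\alpha}r_i^{*\alpha}$ into the admissibility inequality of Lemma \ref{Lemma-admissible-curvature space}, and $(2)\Rightarrow(3)$ is the observation that this curvature vector lies in $\mathscr{K}_{GB}\cap(\cap_I\mathscr{Y}_I)$ and in the orthant determined by the sign of $\chi(M)$. You merely write out the details the paper leaves as ``obvious,'' plus an accurate side remark on why the converse is omitted.
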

\textbf{Proof.}
For (1) $\Rightarrow$ (2), using formula (\ref{K-satisfy}). For (2) $\Rightarrow$ (3), it's obviously.
\qed\\

\begin{definition}\label{Def-modified-alpha-flow}
Suppose $(M, \mathcal{T}, \Phi)$ is a weighted triangulated surface with circle packing metric $r$, $\overline{R}\in C(V)$
is a function defined on $M$. The modified $\alpha$-flow with respect to $\overline{R}$ is defined to be
\begin{equation}\label{Equation-modified-alpha-flow}
\frac{du_i}{dt}=\overline{R}_ir_i^{\alpha}-K_i.
\end{equation}
\end{definition}

$\overline{R}$ is called admissible if there is a circle packing metric $\overline{r}$ with curvature $\overline{R}$. The modified $\alpha$-flow can be used to study prescribing curvature problem. On one hand, if the solution to the modified $\alpha$-flow (\ref{Equation-modified-alpha-flow}) converges, then $\overline{R}$ is admissible. On the other hand, we have
\begin{theorem}\label{Thm-prescribing curvature}
Suppose $(M, \mathcal{T}, \Phi)$ is a weighted triangulated surface and $\overline{R}\in C(V)$ is a function defined on $M$.
If $\alpha\overline{R}_i\leq 0$ for all $i$, but not identically zero, and $\overline{R}$ is admissible by a metric $\overline{r}$. Then $\overline{r}$ is the unique metric in $\mathds{R}^N_{>0}$ such that it's $\alpha$-curvature is $\overline{R}$. Moreover, the solution to the modified flow (\ref{Equation-modified-alpha-flow}) converges exponentially fast to $\overline{r}$.
\end{theorem}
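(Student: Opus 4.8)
The plan is to realise the modified $\alpha$-flow (\ref{Equation-modified-alpha-flow}) as the negative gradient flow of a strictly convex, proper energy on $\mathds{R}^{N}$; once this is set up, uniqueness is automatic and convergence is a soft compactness argument. First I would introduce the modified $\alpha$-potential
$$\overline{F}(u)=\int_{u_{0}}^{u}\sum_{i=1}^{N}\bigl(K_{i}-\overline{R}_{i}r_{i}^{\alpha}\bigr)\,du_{i},$$
exactly as the $\alpha$-potential in Proposition \ref{Prop-negative-gradient-flow}: the $1$-form $\sum(K_{i}-\overline{R}_{i}r_{i}^{\alpha})\,du_{i}$ is closed because $\partial K_{i}/\partial u_{j}=\partial K_{j}/\partial u_{i}$ and because $\partial(\overline{R}_{i}r_{i}^{\alpha})/\partial u_{j}=\alpha\overline{R}_{i}r_{i}^{\alpha}\delta_{ij}$ is diagonal. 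Then (\ref{Equation-modified-alpha-flow}) is exactly $\dot{u}=-\nabla\overline{F}$.

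Next I would show $\overline{F}$ is strictly convex. One computes
$$Hess_{u}\overline{F}=L-\alpha\,diag\bigl(\overline{R}_{1}r_{1}^{\alpha},\dots,\overline{R}_{N}r_{N}^{\alpha}\bigr)=L+D,$$
where $D$ is diagonal with entries $-\alpha\overline{R}_{i}r_{i}^{\alpha}\ge 0$ (using $\alpha\overline{R}_{i}\le 0$) and with $\operatorname{tr}D>0$, since $\alpha\overline{R}_{i}$ is not identically zero (in particular $\alpha\neq 0$). As $L$ is positive semidefinite with kernel $\mathds{R}(1,\dots,1)^{T}$, any $v$ with $v^{T}(L+D)v=0$ must be a multiple $c(1,\dots,1)^{T}$, and then $0=v^{T}Dv=c^{2}\operatorname{tr}D$ forces $v=0$. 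Hence $Hess_{u}\overline{F}>0$ on all of $\mathds{R}^{N}$, i.e. $\overline{F}$ is strictly convex.

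From strict convexity I would extract both uniqueness and properness. A strictly convex function has at most one critical point, which is then its unique global minimum; and the critical points of $\overline{F}$ are precisely the metrics with $K_{i}=\overline{R}_{i}r_{i}^{\alpha}$, i.e. with $\alpha$-curvature $\overline{R}$. Since $\overline{r}$ (log-coordinate $\overline{u}$) is assumed to be one such metric, $\overline{u}$ is the unique critical point and unique minimizer of $\overline{F}$, which is exactly the asserted uniqueness in $\mathds{R}^{N}_{>0}$. For properness I would argue along rays from $\overline{u}$: for a unit vector $v$, $g_{v}(t)=\overline{F}(\overline{u}+tv)$ is strictly convex with $g_{v}'(0)=v\cdot\nabla\overline{F}(\overline{u})=0$, hence $g_{v}'(1)>0$; since $v\mapsto g_{v}'(1)=v\cdot\nabla\overline{F}(\overline{u}+v)$ is continuous on the compact unit sphere it has a positive minimum $c$, and monotonicity of $g_{v}'$ gives $\overline{F}(\overline{u}+Rv)\ge g_{v}(1)+c(R-1)$ for $R\ge 1$. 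Thus $\overline{F}(w)\to+\infty$ as $\|w\|\to\infty$, so every sublevel set of $\overline{F}$ is compact. (Alternatively this properness can be read off from Lemma \ref{Lemma-admissible-curvature space} as in the proof of Theorem \ref{Thm-convergence-nonpositive Euler number}.)

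The convergence is then standard. Along the flow $\tfrac{d}{dt}\overline{F}(u(t))=-\|\nabla\overline{F}(u(t))\|^{2}\le 0$, so $u(t)$ stays in the compact sublevel set $\{\overline{F}\le\overline{F}(u(0))\}$; in particular the right-hand side of (\ref{Equation-modified-alpha-flow}) remains bounded, so the solution exists for all $t\ge 0$. Since $\overline{F}(u(t))$ decreases to a finite limit, $\int_{0}^{\infty}\|\nabla\overline{F}(u(t))\|^{2}\,dt<\infty$, so $\nabla\overline{F}(u(t_{k}))\to 0$ along some $t_{k}\to\infty$; a subsequential limit is a critical point, hence $\overline{u}$, and since $\overline{u}$ is the unique minimizer of a strictly convex proper function, $\overline{F}(u(t))\downarrow\overline{F}(\overline{u})$ forces $u(t)\to\overline{u}$. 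For the exponential rate, near $\overline{u}$ one has $Hess_{u}\overline{F}\ge\lambda I$ for some $\lambda>0$, so with $w=u-\overline{u}$ and $\nabla\overline{F}(u)=\bigl(\int_{0}^{1}Hess\,\overline{F}(\overline{u}+sw)\,ds\bigr)w$ one gets $\tfrac{d}{dt}\|w\|^{2}=-2w\cdot\nabla\overline{F}(u)\le-2\lambda\|w\|^{2}$ for $t$ large, which yields exponential convergence $r(t)\to\overline{r}$. I expect the properness of $\overline{F}$ to be the only real obstacle: strict convexity by itself does not force compact sublevel sets, so one genuinely needs the ray argument above (which crucially uses that the critical point $\overline{u}$ exists) or the combinatorial input of Lemma \ref{Lemma-admissible-curvature space}; everything else is bookkeeping.
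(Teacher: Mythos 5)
Your proposal is correct and follows essentially the same route as the paper: the same modified $\alpha$-potential $\overline{F}$, the same Hessian computation $Hess_u\overline{F}=L-\alpha\,diag(\overline{R}_ir_i^{\alpha})$, strict convexity plus properness (the paper's Lemma \ref{Lemma-proper} is exactly your ray argument), and the standard Lyapunov/compactness endgame. The only differences are cosmetic: you spell out why $L+D$ is positive definite (which the paper leaves as ``easy to check'') and you conclude convergence via integrability of $\|\nabla\overline{F}\|^2$ rather than the paper's $\varphi''\geq 0$ argument.
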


\textbf{Proof.}
The first part is obviously, and $\overline{R}$ is admissible by metric $r(+\infty)$. For the second part, for given
function $\overline{R}\in C(V)$, we can introduce the following modified $\alpha$-potential
\begin{equation}\label{definition of modified Ricci potential}
\overline{F}(u)=\int_{u_0}^u\sum_{i=1}^N\left(K_i-\overline{R}_ir_i^{\alpha}\right)du_i.
\end{equation}
It is easy to check that the modified $\alpha$-potential $\overline{F}$ is well-defined. Furthermore, by direct
calculation, we have
\begin{equation*}
\begin{aligned}
Hess_u\overline{F}=L-\Sigma^{\frac{\alpha}{2}}
\left(
       \begin{array}{ccc}
       \alpha\overline{R}_1 &   &   \\
        & \ddots &   \\
        &   & \alpha\overline{R}_N \\
       \end{array}
\right)\Sigma^{\frac{\alpha}{2}}.
\end{aligned}
\end{equation*}
It is easy to check that, if $\alpha\overline{R}_i\leq 0$ for $i=1, \cdots, N$ and not identically zero, $Hess_u\overline{F}$ is
positive definite. By Lemma \ref{Lemma-injective}, $\nabla_u \overline{F}=(K_1-\overline{R}_1r_1^{\alpha},\cdots,K_N-\overline{R}_Nr_N^{\alpha})^T$
is an injective map from $u\in \mathds{R}^N$ to $\mathds{R}^N$. Hence $\overline{r}$ is the unique zero point of $\nabla_u \overline{F}$. This fact implies that $\overline{r}$ is the unique metric in $\mathds{R}^N_{>0}$ such that it's curvature is $\overline{R}$. By Lemma \ref{Lemma-proper}, we know that $\overline{F}$ is proper and $\lim\limits_{u\rightarrow\infty}\overline{F}(u)=+\infty$. Furthermore, $\frac{d}{dt}F(u(t))=-\sum_i(K_i-\overline{R}_ir_i^{\alpha})^2\leq 0$ implies that the solution of (\ref{Equation-modified-alpha-flow}) lies in a compact region. The following of the proof is the same as that of Theorem \ref{Thm-convergence-nonpositive Euler number}, so we omit it here. \qed\\

\begin{remark}\label{prescribing problem for R=0}
The second part of Theorem \ref{Thm-prescribing curvature} implies that $\alpha\chi(M)<0$. If $\alpha\overline{R}_i=0$ for all $i$, then the corresponding prescribing curvature problem is already solved in Theorem \ref{Thm-convergence-negative Euler}. In this case, the metric $\overline{r}$ is not unique. However, it's unique up to scaling. This is slightly different from Theorem \ref{Thm-prescribing curvature}.
\end{remark}

In the following of this section, we consider more general ``area element" $A_i$. It's interesting to define the ``$A$-curvature" as $R_i=K_i/A_i$, where $A_i>0$ is a function of circle packing metric $r\in \mathds{R}_{>0}^N$. We can consider the following discrete flow
\begin{equation}\label{Def-Aflow}
u'_i(t)=\frac{2\pi\chi(M)}{\sum A_i}A_i-K_i,
\end{equation}
which is called ``$A$-flow" for short. This generalized $A$-flow can be used to evolve a metric to a metric with constant $A$-curvature, i.e. a metric $r$ satisfying $K_i=sA_i$ for all $i\in V$, where $s=\frac{2\pi\chi(M)}{\sum A_i}$. It's easy to see the solution to $A$-flow always exists for all time $t\in (-\infty,+\infty)$. So this flow seems simpler than other flows such as $\dot{u}_i=s-R_i$, which is an $A$-generalization of flow (\ref{Def-new Ricci flow}). Furthermore, if the solution $r(t)$ to $A$-flow converges to $r(+\infty)\in\mathds{R}_{>0}^N$, then $r(+\infty)$ has constant $A$-curvature. It's very interesting that we can select $A_i$ as the real area instead of the area of disk packed at $i$, hence
\begin{equation}\label{Equation-sum-A=area-of-M}
\sum_{i=1}^NA_i=Area(M, \mathcal{T}, \Phi, r)
\end{equation}
is necessary, where $Area(M, \mathcal{T}, \Phi, r)$ is the total real area of a weighted triangulated surface $(M, \mathcal{T}, \Phi)$ with a fixed circle packing metric $r$. Then it's easy to see the following three selections of $A_i$ all satisfy (\ref{Equation-sum-A=area-of-M}).

\begin{example}
Select $A_i=\sum\limits_{\triangle ijk \in F}Area(\triangle ijk)/3$, where the sum is taken over all the triangles with $i$ as one of its vertices.
\end{example}

\begin{example}
Consider the dual structure determined by Thurston's circle patterns. For any face $\{ijk\}\in F$, denote $C_i,C_j,C_k$ as the closed disks centered at $i$, $j$ and $k$ so that their radii are $r_i$, $r_j$ and $r_k$. They both intersect with each other. Let $\mathcal{L}_i$, $\mathcal{L}_j$, $\mathcal{L}_k$ be the geodesic lines passing through the pairs of the intersection points of $\{C_k,C_j\}$, $\{C_k,C_i\}$, $\{C_i,C_j\}$. These three lines $\mathcal{L}_i$, $\mathcal{L}_j$, $\mathcal{L}_k$ must intersect in a common point $O_{ijk}$. Connect $O_{ijk}$ and $O_{ijl}$ whenever triangles $\{ijk\}$ and $\{ijl\}$ share a common edge $\{ij\}\in E$. Thus we get a dual graph. For more details see \cite{CL1,GX4,G3}. Select $A_i=Area(D_i)$, where $D_i$ is the dual $2$-cell of $i$.
\end{example}

\begin{example}
Inspired by \cite{MY}, we can select $A_i=Area(V_i)$, where $V_i$ is the Voronoi dual $2$-cell of $i$ in the Delaunay triangulation.
\end{example}

Under the assumption (\ref{Equation-sum-A=area-of-M}), the constant $A$-curvature is $\frac{2\pi\chi(M)}{Area(M, \mathcal{T}, \Phi, r)}$. If $(M, \mathcal{T}, \Phi, r)$ approximates a smooth Riemannian surface $(M, g)$, then $Area(M, \mathcal{T}, \Phi, r)$ approximates $Area(M, g)$. Hence the constant $A$-curvature $\frac{2\pi\chi(M)}{Area(M, \mathcal{T}, \Phi, r)}$ approximates the smooth average curvature $\frac{2\pi\chi(M)}{Area(M, g)}$=$\frac{\int_M Kdvol}{\int_M dvol}$. This fact inspires us to consider the following problem.
\begin{conjecture}
Fix a smooth Riemannian surface $(M, g)$. Suppose $(M_n, \mathcal{T}_n, \Phi_n, r_n)$ is a sequence of weighted triangulation of $M$ with initial circle packing metric $r_n$. $M_n$ is different with $(M, g)$ as metric space, although they are topologically equal. For each $n$, one can evolve $A$-flow (\ref{Def-Aflow}) and get a solution $r_n(t)$, $t\in [0,+\infty)$. Meanwhile, one can evolve $(M, g)$ by smooth Ricci flow and derive a solution $g(t)$, $t\in [0,+\infty)$. Assuming the initial Gromov-Hausdorff distance between $(M, g)$ and $(M_n, \mathcal{T}_n, \Phi_n, r_n)$ tends to zero, then $r_n(t)\rightarrow g(t)$ as $n\rightarrow+\infty$.
\end{conjecture}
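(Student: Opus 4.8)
The plan is to read this as a convergence-of-scheme problem: the $A$-flow (\ref{Def-Aflow}) is a spatial discretization of the smooth \emph{normalized} Ricci (Yamabe) flow $\partial_t g=(\bar r-R_g)\,g$ on the surface (the normalization $2\pi\chi(M)/\!\sum A_i$ is already built in, so one should compare with the normalized smooth flow), and one wants a Lax-type ``consistency $+$ stability $\Rightarrow$ convergence'' argument, now over the long interval $[0,+\infty)$ rather than a fixed $[0,T]$. First I would fix precisely what ``$r_n(t)\to g(t)$'' should mean: each circle packing metric $r_n(t)$ determines a piecewise flat metric on $M_n$ via $l_{ij}=\sqrt{r_i^2+r_j^2+2r_ir_j\cos\Phi_{ij}}$, hence a length space, and the target is that these length spaces converge in the Gromov--Hausdorff sense to $(M,g(t))$, uniformly on compact time-intervals and then, via convergence of surface Ricci flow, on all of $[0,+\infty)$. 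It is essential at this step to strengthen the hypothesis beyond mere Gromov--Hausdorff closeness of the initial data: one must also assume the triangulations $\mathcal{T}_n$ have bounded geometry (uniformly non-degenerate simplices, a Delaunay condition as in the last two Examples, mesh size $\to 0$), since Gromov--Hausdorff closeness alone controls neither local curvature nor combinatorics.

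Second I would establish \textbf{consistency}: along smooth data, the right-hand side of (\ref{Def-Aflow}) converges to the right-hand side of the smooth flow. Two inputs feed this. (i) Discrete Gauss--Bonnet identifies $K_i$ as the curvature measure concentrated at the vertex $i$, and for a bounded-geometry triangulation refining $(M,g)$ one has the quantitative comparisons $K_i=\int_{D_i}K_g\,dvol_g+o(1)$ and $A_i=\mathrm{Area}(D_i)+o(1)$ with $D_i$ the dual cell, so that the $A$-curvature $K_i/A_i\to K_g$ pointwise --- exactly the spirit of the $2$-curvature approximation recorded in \cite{GX4}, and in the weak sense this is the Cheeger--M\"uller--Schrader convergence of curvature measures. (ii) The convergence theory of circle packings (Rodin--Sullivan, He--Schramm) is needed to ensure that the piecewise flat structures really converge to the prescribed conformal structure on $M$ and not to some other limit. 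Combining these, $sA_i-K_i$, after division by $A_i$, is a consistent approximation of $(\bar r-R_g)$ with an $o(1)$ error in a discrete $C^0$ or $H^1$ norm.

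Third comes \textbf{stability}: I would show the discrete trajectories stay in a fixed compact family of admissible metrics, uniformly in $n$, and that the spatial consistency error is not amplified along the flow. Here I would lean on the variational structure of the $\alpha$-flow and its conjectural $A$-analogue (the Hessian of the $\alpha$-potential is $L$ twisted by the area weights, cf. (\ref{Hession of F}) and the Hessian computed in the proof of Theorem \ref{Thm-prescribing curvature}), on the discrete maximum principle for the curvature used in \cite{CL1} and \cite{GX4}, and on the smooth counterparts (Hamilton's and Chow's convergence of surface Ricci flow to constant curvature). A Gr\"onwall estimate for the difference between the (interpolated) discrete solution and the smooth solution, with the consistency error as forcing term, then yields uniform-on-compacts convergence; the long-time statement follows by inserting the two convergence results --- a suitable $A$-analogue of Theorem \ref{Thm-convergence-negative Euler} on the discrete side when $\alpha\chi(M)\le0$, and smooth convergence on the continuous side --- at a large fixed time and controlling the remaining tails.

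I expect three genuine obstacles. The first is a \emph{time-parametrization mismatch}: in $u$-coordinates the $A$-flow moves $u_i$ at rate $sA_i-K_i$, and $A_i=O(\mathrm{mesh}^2)\to 0$, so over a fixed $t$-interval the discrete metric hardly changes; one must either introduce a mesh-dependent time rescaling (so the statement is really about the rescaled flow) or argue that $s=2\pi\chi(M)/\!\sum A_i$ already absorbs it --- pinning down the time variable for which ``$r_n(t)\to g(t)$'' is actually true is the first real difficulty. The second is obtaining a \emph{uniform-in-$n$} rate in the consistency step together with a uniform lower bound on the smallest nonzero eigenvalue of the twisted Laplacian $\Delta_{\alpha}$ (equivalently of $L$) along the whole flow, without which the Gr\"onwall constant degenerates over $[0,+\infty)$. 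The third, and most serious, is the $\chi(M)>0$ case (spheres): there the discrete flow need not converge and its limit depends on the initial metric, exactly mirroring the delicacy of normalized Ricci flow on $S^2$; even formulating the right statement --- and matching the discrete divergence/soliton behaviour to the smooth one --- will likely require a discrete analogue of the entropy or isoperimetric monotonicity used in Hamilton's and Chow's treatment of $S^2$.
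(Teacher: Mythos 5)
First, note that the statement you are addressing is presented in the paper as a \emph{conjecture}: the authors offer no proof, so there is nothing of theirs to compare your argument against. The only question is whether your proposal itself closes the problem, and it does not. What you have written is a sensible research program in the Lax ``consistency $+$ stability'' mould, but each of its three pillars is asserted rather than established, and you yourself flag the point at which each one breaks. Most seriously, the time-parametrization mismatch you identify is not a technical wrinkle to be deferred: with $u_i'=sA_i-K_i$ and $A_i,\,K_i=O(\mathrm{mesh}^2)$, the discrete solution is essentially stationary on any fixed interval $[0,T]$ as the mesh is refined, so the conclusion $r_n(t)\rightarrow g(t)$ cannot hold for the flow (\ref{Def-Aflow}) as written unless time is rescaled by a mesh-dependent factor. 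Resolving this is not an optional refinement --- it determines whether the conjecture is even correctly formulated --- and your proposal leaves it open.

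Second, the consistency estimate $K_i/A_i\rightarrow K_g$ in a norm strong enough to feed a Gr\"onwall argument is itself an unproved (and in this generality quite possibly false) assertion: the Cheeger--M\"uller--Schrader results give convergence of curvature \emph{measures}, i.e.\ weak-$*$ convergence against test functions, whereas your stability step needs a pointwise or $\ell^\infty$-type consistency error that is uniform along the entire trajectory; for circle packings with general weights $\Phi_n$ no such estimate is known. Likewise the uniform-in-$n$ lower bound on the first nonzero eigenvalue of $L$ (equivalently of $-\Delta_{\alpha}$) along the flow, which you correctly identify as necessary to keep the Gr\"onwall constant from degenerating on $[0,+\infty)$, is itself an open problem, and the $\chi(M)>0$ case is untouched. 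In short, your outline is a reasonable map of what a proof would have to contain and correctly locates the hard points, but none of those points is actually crossed, so the statement remains exactly as open after your argument as before it.
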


\section{$\alpha$ order combinatorial flow in three dimension}\label{3-dimensional combinatorial Yamabe problem}
Suppose $M$ is a 3-dimensional compact manifold with a triangulation $\mathcal{T}=\{V,E,F,T\}$, where the symbols $V,E,F,T$ represent the sets of vertices, edges, faces and tetrahedrons respectively. A sphere packing metric is a map $r:V\rightarrow (0,+\infty)$ such that the length between
vertices $i$ and $j$ is $l_{ij}=r_{i}+r_{j}$ for each edge $\{i,j\}\in E$, and the lengths $l_{ij},l_{ik},l_{il},l_{jk},l_{jl},l_{kl}$ determines a Euclidean tetrahedron for each tetrahedron $\{i,j,k,l\}\in T$. Glickenstein pointed out \cite{G1} that a tetrahedron $\{i,j,k,l\}\in T$ generated by four positive radii $r_{i},r_{j},r_{k},r_{l}$ can be realized as a Euclidean tetrahedron if and only if
\begin{equation}\label{nondegeneracy condition}
Q_{ijkl}=\left(\frac{1}{r_{i}}+\frac{1}{r_{j}}+\frac{1}{r_{k}}+\frac{1}{r_{l}}\right)^2-
2\left(\frac{1}{r_{i}^2}+\frac{1}{r_{j}^2}+\frac{1}{r_{k}^2}+\frac{1}{r_{l}^2}\right)>0.
\end{equation}
Thus the space of admissible Euclidean sphere packing metrics is
$$\mathfrak{M}_{\mathcal{T}}=\left\{\;r\in\mathds{R}^N_{>0}\;\big|\;Q_{ijkl}>0, \;\forall \{i,j,k,l\}\in T\;\right\}.$$
Cooper and Rivin \cite{CR} called the tetrahedrons generated in this way conformal and proved that a tetrahedron is conformal if and only if there exists a unique sphere tangent to all of the edges of the tetrahedron. Moreover, the point of tangency with the edge $\{i,j\}$ is of distance $r_i$ to $v_i$. They further proved that $\mathfrak{M}_{\mathcal{T}}$ is a simply connected open subset of $\mathds{R}^N_{>0}$, but not convex.

For a triangulated 3-manifold $(M, \mathcal{T})$ with sphere packing metric $r$, there is also the notion of combinatorial scalar curvature.
Cooper and Rivin \cite{CR} defined combinatorial scalar curvature $K_{i}$ at a vertex $i$ as angle deficit of solid angles
\begin{equation}\label{Def-CR curvature}
K_{i}= 4\pi-\sum_{\{i,j,k,l\}\in T}\alpha_{ijkl},
\end{equation}
where $\alpha_{ijkl}$ is the solid angle at the vertex $i$ of the Euclidean tetrahedron $\{i,j,k,l\}\in T$ and the sum is taken over all tetrahedrons with $i$ as one of its vertices. For this curvature, Glickenstein \cite{G1} first defined a combinatorial Yamabe flow
\begin{equation}\label{Flow-Glickenstein}
\frac{dr_i}{dt}=-K_ir_i
\end{equation}
and give some very interesting and inspiring results.

Similar to the two dimensional case, Cooper and Rivin's definition of combinatorial scalar curvature $K_i$ is scaling invariant, which is not so satisfactory. The authors \cite{GX4} once defined a new combinatorial scalar curvatures as $R_i=K_i/r_i^2$ on 3-dimensional triangulated manifold $(M, \mathcal{T})$ with sphere packing metric $r$. Consider $r_i^2$ as the analogue of the smooth Riemannian metric. If $\widetilde{r}_i^2=c r_i^2$
for some positive constant $c$, we have $\widetilde{R}_i=c^{-1}R_i$. This is similar to the transformation of scalar curvature in smooth case under scaling. For this type of combinatorial scalar curvature, the authors defined a combinatorial Yamabe functional
\begin{equation}
Q(r)=\frac{\mathcal{S}}{V^{\frac{1}{3}}}=\frac{\sum_{i=1}^NK_ir_i}{(\sum_{i=1}^Nr_i^3)^{1/3}}, \ \ r\in\mathfrak{M}_{\mathcal{T}},
\end{equation}
and proposed to study the corresponding constant curvature problem which is called combinatorial Yamabe problem. For this, the authors defined a new discrete Yamabe flow
\begin{equation}
\frac{dg_i}{dt}=-R_ig_i,
\end{equation}
with normalization
\begin{equation}\label{normalized comb Yamabe flow}
\frac{dg_i}{dt}=(R_{av}-R_i)g_i,
\end{equation}
where $g_i=r_i^2$ and $R_{av}=\frac{\mathcal{S}}{\sum_{i=1}^Nr_i^3}$ is the average of the combinatorial scalar curvature.

Constant $R$-curvature metric means that $R_i\equiv$constant, which implies $K=R_{av}r^2$. The authors \cite{GX2} once defined the so called discrete quasi-Einstein metric satisfying $K=\lambda r$, which is similar to constant $R$-curvature metric. Motivated by these phenomena, we can generalize these properties to $\alpha$ order combinatorial scalar curvature (``$\alpha$-curvature" for short).

\begin{definition}\label{Def-alpha-curvature-3d}
For a triangulated 3-manifold $(M, \mathcal{T})$ with sphere packing metric $r$, the $\alpha$-curvature at the vertex $i$ is defined as
\begin{equation}\label{alpha-curvature 3d}
R_{\alpha,i}=\frac{K_i}{r_i^{\alpha}}
\end{equation}
for any $\alpha\in\mathds{R}$, where $K_i$ is given by (\ref{Def-CR curvature}).
\end{definition}

The study of smooth Einstein-Hilbert functional has a long history. For piecewise linear metric case, Regge \cite{Re} first give a discretization of this functional. For sphere packing metric case, the Einstein-Hilbert-Regge functional $\mathcal{S}=\sum_{i=1}^N K_i r_i$ was introduced by Cooper and Rivin in \cite{CR}.

\begin{definition}\label{Def-alpha-normalize-Regge-functional}
Suppose $(M, \mathcal{T})$ is a triangulated 3-manifold with a fixed triangulation $\mathcal{T}$. For any $\alpha\in \mathds{R}$, $\alpha\neq-1$, the $\alpha$ order combinatorial Yamabe functional (``$\alpha$-functional" for short) is defined as
\begin{equation}\label{Def-3d-Yamabe-functional}
Q_{\alpha}(r)=\frac{\mathcal{S}}{\|r\|_{\alpha+1}}=\frac{\sum_{i=1}^NK_ir_i}{\big(\sum_{i=1}^Nr_i^{\alpha+1}\big)^{\frac{1}{\alpha+1}}}, \ \ r\in\mathfrak{M}_{\mathcal{T}}.
\end{equation}
The $\alpha$ order combinatorial Yamabe invariant with respect to $\mathcal{T}$ is defined as
$$Y_{M,\mathcal{T}}=\inf_{r\in\mathfrak{M}_{\mathcal{T}}} Q_{\alpha}(r),$$
while the $\alpha$ order combinatorial Yamabe constant of $M$ is defined as $Y_{M}=\sup\limits_{\mathcal{T}}\inf\limits_{r\in\mathfrak{M}_{\mathcal{T}}} Q_{\alpha}(r).$
\end{definition}

When $\alpha\geq0$, then $|Q_{\alpha}(r)|\leq\|K\|_{1+\frac{1}{\alpha}}$. When $-1<\alpha<0$, then $|Q_{\alpha}(r)|\leq \sum_i|K_i|$. Hence the $\alpha$ order combinatorial Yamabe invariant $Y_{M,\mathcal{T}}$ is well defined when $\alpha>-1$. For $\alpha\geq0$ case, $Y_{M,\mathcal{T}}$ attains the minimum value $-\|K\|_{1+\frac{1}{\alpha}}$ at a metric $r^*$ if and only if $r^*$ is a constant $\alpha$-curvature metric with $s_{\alpha}^*\leq0$. As noted in \cite{GX4}, the admissible sphere packing metric space $\mathfrak{M}_{\mathcal{T}}$ for a given triangulated manifold $(M,\mathcal{T})$ may be considered as the combinatorial conformal class for $(M,\mathcal{T})$, which is an analogue of the conformal class $[g_0]$ of a Riemannian manifold $(M, g_0)$. Denote $s_{\alpha}=\mathcal{S}/\|r\|_{\alpha+1}^{\alpha+1}$. Then we have
\begin{equation}\label{gradient of Q-alpha}
\nabla_{r}Q_{\alpha}=\frac{K-s_{\alpha}r_i^{\alpha}}{\|r\|_{\alpha+1}}.
\end{equation}
Hence $r$ is a constant $\alpha$-curvature metric iff. it is
a critical point of $\alpha$-functional $Q_{\alpha}(r)$. We raise the following discrete $\alpha$-Yamabe problem
on 3-dimensional triangulated manifold.\\
~\\
\textbf{The Combinatorial $\alpha$-Yamabe Problem.}
Given a 3-dimensional manifold $M$ with triangulation $\mathcal{T}$,
find a sphere packing metric with constant combinatorial $\alpha$-curvature in the combinatorial
conformal class $\mathfrak{M}_{\mathcal{T}}$.\\

It's easy to see, if $r$ is a constant $\alpha$-metric with $K=\lambda r^{\alpha}$, then $\lambda=s_{\alpha}=\mathcal{S}/\|r\|_{\alpha+1}^{\alpha+1}$. To study the combinatorial $\alpha$-Yamabe problem, we introduce 3-dimensional $\alpha$-Yamabe flow.
\begin{definition}
Given a triangulated 3-manifold $(M, \mathcal{T})$ with sphere packing metric $r$. For any $\alpha\in\mathds{R}$, the $\alpha$ order combinatorial Yamabe flow (``$\alpha$-flow" for short) is
\begin{equation}\label{Def-alpha-Yamabe flow-3d}
\frac{dr_i}{dt}=s_{\alpha}r_i^{\alpha}-K_i.
\end{equation}
\end{definition}
\begin{remark}
The flow $\dot{r}=\lambda r-K$ ($\lambda=s_1$) introduced by the authors in \cite{GX2} is in fact the $\alpha$-flow defined above with $\alpha=1$. Hence $\alpha$-flow (\ref{Def-alpha-Yamabe flow-3d}) may be considered as a different normalization of the flow $\dot{r}=\lambda r-K$.
\end{remark}

Along the 3-dimensional $\alpha$-flow, $\|r(t)\|_2^2=\sum r_i^2(t)$ is invariant. Hence we always assume $r(0)\in\mathbb{S}^{N-1}$ in the following. It's easy to see that, if the solution of (\ref{Def-alpha-Yamabe flow-3d}) converges to a metric $r(\infty)$, then $r(\infty)$ is a metric with constant $\alpha$-curvature. It's interesting that almost all 3-dimensional results in \cite{GX2,GX4} are still true for $\alpha$-flow (\ref{Def-alpha-Yamabe flow-3d}). We just state some of them here.

\begin{theorem}\label{Thm-3d-compact-exist-const-alpha-metric}
If the solution of (\ref{Def-alpha-Yamabe flow-3d}) lies in a compact region in $\mathfrak{M}_{\mathcal{T}}\cap \mathbb{S}^{N-1}$, then there
exists at least one sphere packing metric with constant $\alpha$-curvature on $(M, \mathcal{T})$.\qed \\
\end{theorem}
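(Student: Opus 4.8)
The plan is to mimic the compactness-plus-gradient-flow argument used for the two-dimensional Theorem \ref{Thm-convergence-nonpositive Euler number}, adapting it to the three-dimensional setting where the phase space is $\mathfrak{M}_{\mathcal{T}}\cap\mathbb{S}^{N-1}$ rather than all of $\mathds{R}^N_{>0}$. First I would recall that, by Proposition \ref{Prop-negative-gradient-flow}-type reasoning in three dimensions, the $\alpha$-flow (\ref{Def-alpha-Yamabe flow-3d}) is (up to a positive reparametrization of $r_i$) the negative gradient flow of the $\alpha$-functional $Q_{\alpha}$ restricted to the invariant sphere $\|r\|_2=1$: indeed $\nabla_r Q_\alpha=(K-s_\alpha r^\alpha)/\|r\|_{\alpha+1}$ by (\ref{gradient of Q-alpha}), so the right-hand side $s_\alpha r_i^\alpha-K_i$ of the flow is exactly $-\|r\|_{\alpha+1}$ times the gradient of $Q_\alpha$, and since $\sum r_i\dot r_i=\sum r_i(s_\alpha r_i^\alpha-K_i)=s_\alpha\|r\|_{\alpha+1}^{\alpha+1}-\mathcal{S}=0$ the flow stays on $\mathbb{S}^{N-1}$. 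Consequently $t\mapsto Q_\alpha(r(t))$ is nonincreasing, with derivative $-\|r\|_{\alpha+1}^{-1}\sum_i(s_\alpha r_i^\alpha-K_i)^2\le 0$.

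Next I would use the compactness hypothesis. By assumption the trajectory $\{r(t):t\ge 0\}$ stays inside a compact set $\mathcal{C}\subset\mathfrak{M}_{\mathcal{T}}\cap\mathbb{S}^{N-1}$. On $\mathcal{C}$ the function $Q_\alpha$ is continuous and bounded below, so $Q_\alpha(r(t))$ decreases to a finite limit $Q_\infty$; integrating the energy identity gives $\int_0^\infty\sum_i(s_\alpha r_i^\alpha-K_i)^2\,dt<\infty$. Because the integrand is the squared norm of the (bounded, on $\mathcal{C}$) velocity field, and $\frac{d}{dt}\sum_i(s_\alpha r_i^\alpha-K_i)^2$ is itself bounded on $\mathcal{C}$ (all of $K_i$, $r_i$, $s_\alpha$ and their $r$-derivatives are continuous on the compact set, hence uniformly bounded), a standard Barbalat-type argument forces $\sum_i(s_\alpha r_i^\alpha-K_i)^2\to 0$ as $t\to\infty$. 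Then I would take any sequence $t_n\to\infty$; by compactness a subsequence of $r(t_n)$ converges to some $r^*\in\mathcal{C}\subset\mathfrak{M}_{\mathcal{T}}$, and by continuity $s_\alpha(r^*)r_i^{*\alpha}-K_i(r^*)=0$ for every $i$, i.e. $K_i(r^*)=s_\alpha(r^*)r_i^{*\alpha}$, which says precisely that $r^*$ is a sphere packing metric of constant $\alpha$-curvature $R_{\alpha,i}\equiv s_\alpha(r^*)$. This proves the existence assertion, which is all the theorem claims.

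An alternative and perhaps cleaner route, avoiding the Barbalat lemma, is the $\alpha$-Lojasiewicz-type reasoning already used in the two-dimensional proof: since $Q_\alpha$ is real-analytic on the open set $\mathfrak{M}_{\mathcal{T}}\cap\mathbb{S}^{N-1}$ and the trajectory has compact closure there, the limit set is nonempty, connected, and consists entirely of critical points of $Q_\alpha$ on the sphere; any such critical point is a constant $\alpha$-curvature metric by the remark following (\ref{gradient of Q-alpha}). I would present the first (elementary ODE/compactness) argument as the main proof since it requires no analyticity machinery, and mention the second only parenthetically.

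The step I expect to be the main technical obstacle is the transition from $\int_0^\infty\|\dot r\|^2\,dt<\infty$ to $\|\dot r(t)\|\to 0$: one must genuinely invoke the uniform boundedness of $\frac{d}{dt}\|\dot r\|^2$ along the trajectory, and that boundedness in turn rests on the fact that the solid angles $\alpha_{ijkl}$, their derivatives in the $r_i$, and the quantity $s_\alpha=\mathcal{S}/\|r\|_{\alpha+1}^{\alpha+1}$ are all smooth and hence bounded on the compact set $\mathcal{C}\subset\mathfrak{M}_{\mathcal{T}}$ — the last point uses crucially that $\mathcal{C}$ is bounded away from the boundary of $\mathfrak{M}_{\mathcal{T}}$ (where $Q_{ijkl}\to 0$ and angles degenerate). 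Everything else is a direct transcription of the two-dimensional argument, with $\mathbb{S}^{N-1}$ and $\mathfrak{M}_{\mathcal{T}}$ replacing the hyperplane $\sum u_i=0$ and $\mathds{R}^N_{>0}$.
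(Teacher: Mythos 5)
Your argument is correct, and since the paper states this theorem with no proof at all (it is asserted to follow as in \cite{GX2,GX4}), your monotonicity-plus-compactness argument is exactly the intended one: $\tfrac{d}{dt}Q_{\alpha}(r(t))=-\|r\|_{\alpha+1}^{-1}\sum_i(K_i-s_{\alpha}r_i^{\alpha})^2\leq 0$, $Q_{\alpha}$ is bounded below on the compact trajectory closure, and a subsequential limit in $\mathfrak{M}_{\mathcal{T}}\cap\mathbb{S}^{N-1}$ is a zero of $\Gamma$, hence a constant $\alpha$-curvature metric. The only remark worth making is that the Barbalat step is more than you need: integrability of $\sum_i(K_i-s_{\alpha}r_i^{\alpha})^2$ already produces a sequence $t_n\to\infty$ along which the integrand tends to $0$, and that, combined with compactness, suffices for the existence claim.
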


\begin{definition}
Given a triangulated 3-manifold $(M, \mathcal{T})$. For any $\alpha\in \mathds{R}$, the $\alpha$ order combinatorial Laplacian (``$\alpha$-Laplacian" for short) $\Delta_{\alpha}:C(V)\rightarrow C(V)$ is defined as
\begin{equation}\label{Def-alpha-Laplacian}
\Delta_{\alpha} f_i=\frac{1}{r_i^{\alpha}}\sum_{j\sim i}(-\frac{\partial K_i}{\partial r_j}r_j)(f_j-f_i)
\end{equation}
for $f\in C(V)$.
\end{definition}

This definition of $\alpha$-Laplacian is a generalization of $\alpha=2$ case, which was carefully studied by the authors in \cite{GX4}. Similar to the two dimensional $\alpha$-Laplacian, the three dimensional $\alpha$-Laplacian (\ref{Def-alpha-Laplacian}) can also be written in a matrix form,
\begin{equation}
\Delta_{\alpha}=-\Sigma^{-\alpha}\Lambda \Sigma
\end{equation}
with $\Delta_{\alpha} f=-\Sigma^{-\alpha}\Lambda \Sigma f$ for each $f\in C(V)$, where $\Sigma=diag\big\{r_1,\cdots,r_N\big\}$ and
\begin{equation}
\Lambda=Hess_r\mathcal{S}=\frac{\partial(K_{1},\cdots,K_{N})}{\partial(r_{1},\cdots,r_{N})}.
\end{equation}
It was proved \cite{CR,Ri,G1,G2} that $\Lambda$ is positive semi-definite with rank $N-1$ and the kernel of $\Lambda$ is the linear space spanned by the vector $r$ (see Lemma \ref{Lemma-3d-Lambda matrix}).\\

Set $\Gamma_i(r)=s_{\alpha}r_i^{\alpha}-K_i$, $1\leq i\leq N$. Then the $\alpha$-flow (\ref{Def-alpha-Yamabe flow-3d}) can be written as $\dot{r}=\Gamma(r)$, which is an autonomy ODE system. Differentiate $\Gamma$, we get
\begin{equation*}
D\Gamma(r)=-\Lambda+\alpha s_{\alpha}\left(diag\left\{r_1^{\alpha-1},\cdots,r_N^{\alpha-1}\right\}-\frac{r^\alpha (r^\alpha)^T}{\|r\|_{\alpha+1}^{\alpha+1}}\right)-\frac{r^{\alpha}\left(K-s_{\alpha}r^{\alpha}\right)^T}{\|r\|_{\alpha+1}^{\alpha+1}}.
\end{equation*}
If $r^*\in\mathfrak{M}_{\mathcal{T}}$ is a sphere packing metric with constant $\alpha$-curvature, then
\begin{equation}\label{Diff-gamma at-r^*}
D\Gamma|_{r^*}=\left(-\Lambda+\alpha s_{\alpha}\left(diag\left\{r_1^{\alpha-1},\cdots,r_N^{\alpha-1}\right\}-\frac{r^\alpha (r^\alpha)^T}{\|r\|_{\alpha+1}^{\alpha+1}}\right)\right)_{r^*}.
\end{equation}

\begin{proposition}\label{Proposition-3d-semi-definite}
Given $(M^3, \mathcal{T})$, suppose $r^*$ is a constant $\alpha$-curvature metric. If the first positive eigenvalue of $-\Delta_{\alpha}$ at $r^*$ satisfies
\begin{equation}\label{3d-lamda1>alfa*s}
\lambda_1(-\Delta_{\alpha})>\alpha s_{\alpha}^*=\frac{\alpha\mathcal{S^*}}{\|r^*\|_{\alpha+1}^{\alpha+1}}
\end{equation}
then $-D\Gamma|_{r^*}$ is positive semi-definite with $rank$ $N-1$ and kernel $\{tr^*|t\in\mathds{R}\}$.
\end{proposition}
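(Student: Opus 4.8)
The plan is to diagonalize $-D\Gamma|_{r^*}$ by exploiting the compatibility between the quadratic form $\Lambda$ and the diagonal perturbation, exactly as was done for $Hess_uF$ in the proof of Theorem \ref{Thm-convergence-nonpositive Euler number}. First I would rewrite the expression (\ref{Diff-gamma at-r^*}) in a symmetrized form. Note that $\Delta_{\alpha}=-\Sigma^{-\alpha}\Lambda\Sigma$, so $-\Delta_{\alpha}$ is conjugate to $\Sigma^{-\frac{\alpha-1}{2}}\Lambda\Sigma^{\frac{1-\alpha}{2}}\cdot\Sigma$; more usefully, setting $\Lambda_{\alpha}\triangleq\Sigma^{-\frac{\alpha-1}{2}}\Lambda\Sigma^{-\frac{\alpha-1}{2}}$ one checks $-\Delta_{\alpha}=\Sigma^{-\frac{\alpha+1}{2}}\Lambda_{\alpha}\Sigma^{\frac{\alpha+1}{2}}$, so $\lambda_1(-\Delta_{\alpha})=\lambda_1(\Lambda_{\alpha})$. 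Similarly, factoring out $\Sigma^{\frac{\alpha-1}{2}}$ on both sides of (\ref{Diff-gamma at-r^*}), we get
\begin{equation*}
-D\Gamma|_{r^*}=\Sigma^{\frac{\alpha-1}{2}}\left(\Lambda_{\alpha}-\alpha s_{\alpha}\left(I-\frac{r^{\frac{\alpha+1}{2}}(r^{\frac{\alpha+1}{2}})^T}{\|r\|_{\alpha+1}^{\alpha+1}}\right)\right)\Sigma^{\frac{\alpha-1}{2}}\Big|_{r^*},
\end{equation*}
where I used that $\Sigma^{\frac{\alpha-1}{2}}\,diag\{r_i^{\alpha-1}\}\,\Sigma^{\frac{\alpha-1}{2}}=\Sigma^{2\alpha-2}$ is conjugate (via the same $\Sigma^{\frac{\alpha-1}{2}}$ factors) to $diag\{r_i^{\alpha-1}\}$ — the cleanest way is to verify that $diag\{r_i^{\alpha-1}\}=\Sigma^{-\frac{\alpha-1}{2}}\cdot I\cdot\Sigma^{-\frac{\alpha-1}{2}}\cdot\Sigma^{2\alpha-2}$... actually the correct bookkeeping is to pull out $\Sigma^{\frac{\alpha-1}{2}}$ so that $diag\{r_i^{\alpha-1}\}\mapsto I$ and $\Lambda\mapsto\Lambda_\alpha$ and the rank-one term $r^\alpha(r^\alpha)^T/\|r\|_{\alpha+1}^{\alpha+1}\mapsto r^{\frac{\alpha+1}{2}}(r^{\frac{\alpha+1}{2}})^T/\|r\|_{\alpha+1}^{\alpha+1}$; I would carry this algebra out carefully in the write-up.

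Next I would invoke Lemma \ref{Lemma-3d-Lambda matrix}: $\Lambda$ is positive semi-definite of rank $N-1$ with kernel spanned by $r$. Hence $\Lambda_\alpha=\Sigma^{-\frac{\alpha-1}{2}}\Lambda\Sigma^{-\frac{\alpha-1}{2}}$ is positive semi-definite of rank $N-1$ with kernel spanned by $\Sigma^{\frac{\alpha-1}{2}}r=r^{\frac{\alpha+1}{2}}$. Choose an orthonormal matrix $P=(e_0,e_1,\dots,e_{N-1})$ with $e_0=r^{\frac{\alpha+1}{2}}/\|r^{\frac{\alpha+1}{2}}\|=r^{\frac{\alpha+1}{2}}/\|r\|_{\alpha+1}^{\frac{\alpha+1}{2}}$ diagonalizing $\Lambda_\alpha$, so $\Lambda_\alpha e_0=0$ and $\Lambda_\alpha e_i=\lambda_i(\Lambda_\alpha)e_i$ for $1\le i\le N-1$. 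Then $e_0$ is exactly the unit eigenvector of the rank-one projector $I-r^{\frac{\alpha+1}{2}}(r^{\frac{\alpha+1}{2}})^T/\|r\|_{\alpha+1}^{\alpha+1}$ with eigenvalue $0$, while each $e_i$ ($i\geq1$) is an eigenvector with eigenvalue $1$ since $r^{\frac{\alpha+1}{2}}\perp e_i$. Therefore $P$ simultaneously diagonalizes both pieces:
\begin{equation*}
-D\Gamma|_{r^*}=\Sigma^{\frac{\alpha-1}{2}}P\,diag\{0,\lambda_1(\Lambda_\alpha)-\alpha s_\alpha,\dots,\lambda_{N-1}(\Lambda_\alpha)-\alpha s_\alpha\}\,P^T\Sigma^{\frac{\alpha-1}{2}}\Big|_{r^*}.
\end{equation*}

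Finally, the hypothesis (\ref{3d-lamda1>alfa*s}) together with $\lambda_1(-\Delta_\alpha)=\lambda_1(\Lambda_\alpha)$ and $\lambda_1(\Lambda_\alpha)\le\lambda_i(\Lambda_\alpha)$ for all $i\ge1$ gives $\lambda_i(\Lambda_\alpha)-\alpha s_\alpha^*>0$ for every $i=1,\dots,N-1$. Thus the middle diagonal matrix is positive semi-definite with exactly one zero eigenvalue, and since conjugation by the invertible matrix $\Sigma^{\frac{\alpha-1}{2}}P$ preserves positive semi-definiteness and rank, $-D\Gamma|_{r^*}$ is positive semi-definite of rank $N-1$. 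Its kernel is spanned by $\Sigma^{\frac{\alpha-1}{2}}Pe_0=\Sigma^{\frac{\alpha-1}{2}}r^{\frac{\alpha+1}{2}}/\|r\|_{\alpha+1}^{\frac{\alpha+1}{2}}$, which is a positive multiple of $\Sigma^{\frac{\alpha-1}{2}}\Sigma^{\frac{\alpha+1}{2}}\mathds{1}=\Sigma^{\alpha}\mathds{1}$... here I should instead note $r^{\frac{\alpha+1}{2}}=\Sigma^{\frac{\alpha-1}{2}}r$ so $\Sigma^{\frac{\alpha-1}{2}}r^{\frac{\alpha+1}{2}}=\Sigma^{\alpha-1}r$, which is not obviously parallel to $r^*$ unless $\alpha=1$; so I must recompute — the right factor to pull out is $\Sigma^{-\frac{\alpha-1}{2}}$ on the left (not $\Sigma^{\frac{\alpha-1}{2}}$), giving kernel vector $\Sigma^{-\frac{\alpha-1}{2}}r^{\frac{\alpha+1}{2}}=\Sigma^{-\frac{\alpha-1}{2}}\Sigma^{\frac{\alpha-1}{2}}r=r$, i.e. $\{tr^*\mid t\in\mathds{R}\}$ as claimed. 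The main obstacle is precisely getting the conjugation exponents right so that the diagonal term $diag\{r_i^{\alpha-1}\}$ becomes the identity while $\Lambda$ becomes the self-adjoint $\Lambda_\alpha$ whose spectrum matches that of $-\Delta_\alpha$; once the bookkeeping is fixed, the argument is a verbatim transcription of the two-dimensional proof of Theorem \ref{Thm-convergence-nonpositive Euler number}, relying only on Lemma \ref{Lemma-3d-Lambda matrix}.
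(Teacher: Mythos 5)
Your proof is correct and follows essentially the same route as the paper: your $\Lambda_\alpha=\Sigma^{-\frac{\alpha-1}{2}}\Lambda\Sigma^{-\frac{\alpha-1}{2}}$ is exactly the paper's $\widetilde{\Lambda}=\Sigma^{\frac{1-\alpha}{2}}\Lambda\Sigma^{\frac{1-\alpha}{2}}$, and the simultaneous diagonalization of $\widetilde{\Lambda}$ and the rank-one projector by $P$, followed by the congruence with $\Sigma^{\frac{\alpha-1}{2}}$, is the paper's argument verbatim. Your final self-correction on the kernel is the right resolution --- for a congruence $MDM^T$ with $M=\Sigma^{\frac{\alpha-1}{2}}P$ the kernel is the image of $\ker D$ under $(M^T)^{-1}=\Sigma^{-\frac{\alpha-1}{2}}P$, giving $\Sigma^{-\frac{\alpha-1}{2}}r^{\frac{\alpha+1}{2}}=r^*$ --- so the conclusion stands.
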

\textbf{Proof.}
Denote $\widetilde{\Lambda}=\Sigma^{\frac{1-\alpha}{2}}\Lambda \Sigma^{\frac{1-\alpha}{2}}$. Then
$$-\Delta_{\alpha}=\Sigma^{-\alpha}\Lambda \Sigma=\Sigma^{-\frac{1+\alpha}{2}}\widetilde{\Lambda}\Sigma^{-\frac{1+\alpha}{2}},$$
which implies that $$\lambda_1(-\Delta_{\alpha})=\lambda_1(\widetilde{\Lambda}).$$
Choose a matrix $P\in O(N)$ such that $P^T\widetilde{\Lambda}P=diag\{0,\lambda_1(\widetilde{\Lambda}),\cdots,\lambda_{N-1}(\widetilde{\Lambda})\}$.
Suppose $P=(e_0,e_1,\cdots,e_{N-1})$,
where $e_i$ is the $(i+1)$-column of $P$.
Then $\widetilde{\Lambda} e_0=0$ and $\widetilde{\Lambda} e_i=\lambda_i e_i,\,1\leq i\leq N-1$,
which implies $e_0=r^\frac{\alpha+1}{2}/\|r^\frac{\alpha+1}{2}\|$ and $r^\frac{\alpha+1}{2}\perp e_i,\,1\leq i\leq N-1$.
Hence $\left(I-\frac{r^{\frac{\alpha+1}{2}}(r^{\frac{\alpha+1}{2}})^T}{\|r\|^{\alpha+1}_{\alpha+1}}\right)e_0=0$ and $\left(I-\frac{r^{\frac{\alpha+1}{2}}(r^{\frac{\alpha+1}{2}})^T}{\|r\|^{\alpha+1}_{\alpha+1}}\right)e_i=e_i$, $1\leq i\leq N-1$.
Furthermore,
$$-D\Gamma|_{r^*}=\Sigma^{\frac{\alpha-1}{2}}P diag\left\{0,\lambda_1(\widetilde{\Lambda})-\alpha s_{\alpha}^*,\cdots,\lambda_{N-1}(\widetilde{\Lambda})-\alpha s_{\alpha}^*\right\}\Sigma^{\frac{\alpha-1}{2}}P^T.$$
Hence the conclusion is derived.\qed \\

\begin{theorem}\label{Thm-3d-isolat-const-alpha-metric}
The constant $\alpha$-curvature metrics satisfying $\lambda_1(-\Delta_{\alpha})>\alpha s_{\alpha}$ are isolated in $\mathfrak{M}_{\mathcal{T}}\cap \mathbb{S}^{N-1}$. Specifically, the constant $\alpha$-curvature metrics with $\alpha s_{\alpha}\leq0$ are isolated.
\end{theorem}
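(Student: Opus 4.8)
The plan is to identify the constant $\alpha$-curvature metrics lying on $\mathbb{S}^{N-1}$ with the zeros of the vector field driving the $\alpha$-flow (\ref{Def-alpha-Yamabe flow-3d}), restricted to the invariant sphere, and then to show that under the hypothesis $\lambda_1(-\Delta_{\alpha})>\alpha s_{\alpha}$ every such zero is \emph{nondegenerate}; isolation then follows from the inverse function theorem. Recall $\Gamma(r)=s_{\alpha}r^{\alpha}-K$, so that the $\alpha$-flow is $\dot r=\Gamma(r)$ and its equilibria are exactly the constant $\alpha$-curvature metrics.

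First I would record two structural facts about $\Gamma$. One: a direct computation gives $\langle r,\Gamma(r)\rangle=s_{\alpha}\sum_i r_i^{\alpha+1}-\sum_i K_i r_i=s_{\alpha}\|r\|_{\alpha+1}^{\alpha+1}-\mathcal{S}=0$ identically on $\mathfrak{M}_{\mathcal{T}}$, so $\Gamma$ is everywhere tangent to the spheres centered at the origin; in particular $\Gamma$ restricts to a smooth tangent vector field $X$ on the $(N-1)$-dimensional open submanifold $U:=\mathfrak{M}_{\mathcal{T}}\cap\mathbb{S}^{N-1}$ of $\mathbb{S}^{N-1}$, whose zero set is precisely the set of constant $\alpha$-curvature metrics in $\mathfrak{M}_{\mathcal{T}}\cap\mathbb{S}^{N-1}$. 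Two: differentiating the identity $\langle r,\Gamma(r)\rangle\equiv0$ in an arbitrary direction $v\in\mathds{R}^N$ at a zero $r^{*}$ of $\Gamma$ yields $\langle r^{*},D\Gamma|_{r^{*}}v\rangle=0$, so the image of the Jacobian $D\Gamma|_{r^{*}}$ from (\ref{Diff-gamma at-r^*}) lies in $(r^{*})^{\perp}=T_{r^{*}}U$; hence the intrinsic linearization $DX|_{r^{*}}$ of the restricted field is exactly $D\Gamma|_{r^{*}}$ restricted to $T_{r^{*}}U$.

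Now I would invoke Proposition \ref{Proposition-3d-semi-definite}: at a constant $\alpha$-curvature metric $r^{*}$ with $\lambda_1(-\Delta_{\alpha})>\alpha s_{\alpha}$, the symmetric matrix $-D\Gamma|_{r^{*}}$ is positive semi-definite with rank $N-1$ and kernel $\{tr^{*}\mid t\in\mathds{R}\}$. Restricting this quadratic form to the hyperplane $(r^{*})^{\perp}$ removes the whole kernel, so $-D\Gamma|_{r^{*}}$ is positive definite, hence invertible, on $T_{r^{*}}U$; by the previous paragraph $DX|_{r^{*}}$ is therefore a linear isomorphism of $T_{r^{*}}U$. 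Passing to a coordinate chart of $U$ around $r^{*}$, the field $X$ becomes a smooth map between open subsets of $\mathds{R}^{N-1}$ vanishing at $r^{*}$ with invertible differential there, so the inverse function theorem shows $r^{*}$ is an isolated zero of $X$. Since every zero of $X$ is a constant $\alpha$-curvature metric, $r^{*}$ is isolated among all constant $\alpha$-curvature metrics in $\mathfrak{M}_{\mathcal{T}}\cap\mathbb{S}^{N-1}$, which is the assertion.

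For the ``specifically'' clause, by Lemma \ref{Lemma-3d-Lambda matrix} the matrix $\Lambda$ is positive semi-definite of rank $N-1$, hence so is the congruent matrix $\widetilde{\Lambda}=\Sigma^{\frac{1-\alpha}{2}}\Lambda\Sigma^{\frac{1-\alpha}{2}}$, so $\lambda_1(-\Delta_{\alpha})=\lambda_1(\widetilde{\Lambda})>0$; thus whenever $\alpha s_{\alpha}\le0$ the strict inequality $\lambda_1(-\Delta_{\alpha})>0\ge\alpha s_{\alpha}$ holds automatically and the first part applies. I do not expect a serious obstacle: the only delicate point is the bookkeeping that transports the ambient Jacobian $D\Gamma|_{r^{*}}$ to the intrinsic linearization of $X$ on $U$ and preserves nondegeneracy, and this is exactly what the tangency identity $\langle r,\Gamma(r)\rangle\equiv0$ together with the kernel computation of Proposition \ref{Proposition-3d-semi-definite} is set up to deliver. \qed
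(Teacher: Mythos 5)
Your proposal is correct and follows essentially the same route as the paper: identify constant $\alpha$-curvature metrics with zeros of $\Gamma$, use Proposition \ref{Proposition-3d-semi-definite} to see that $-D\Gamma|_{r^*}$ is positive semi-definite with kernel exactly the normal direction $\{tr^*\}$, and conclude that the linearization restricted to $\mathbb{S}^{N-1}$ is nondegenerate, whence isolation. The only difference is that you spell out the tangency identity $\langle r,\Gamma(r)\rangle\equiv 0$ and its differentiated consequence, which the paper leaves implicit; this is a welcome clarification rather than a new idea.
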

\textbf{Proof.}
Consider the map $\Gamma:\mathfrak{M}_{\mathcal{T}}\rightarrow \mathds{R}^{N}$, $r\mapsto \Gamma(r)$.
It is easy to see that the zero point of $\Gamma$ corresponds to the metric with constant $\alpha$-curvature. By (\ref{Diff-gamma at-r^*}) and Proposition \ref{Proposition-3d-semi-definite}, if the conditions in this theorem are satisfied, then $D\Gamma|_{r^*}$, the Jacobian of $\Gamma$ at the constant $\alpha$-curvature metric $r^*$, is negative semi-definite with $rank$ $N-1$ and kernel $\{tr^*|t\in\mathds{R}\}$. Notice that the kernel is the normal of $\mathbb{S}^{N-1}$. Restricted to $\mathbb{S}^{N-1}$, $D\Gamma$ is negative definite and then nondegenerate, which implies the conclusions. \qed\\

Proposition \ref{Proposition-3d-semi-definite} shows that constant $\alpha$-curvature metric $r^*$ with $\lambda_1(-\Delta_{\alpha}^*)>\alpha s_{\alpha}^*$ is a asymptotically stable point of the $\alpha$-flow. Hence we have

\begin{theorem}\label{Thm-3d-convergence of CYF under existence}
Given a triangulated manifold $(M^3, \mathcal{T})$. Suppose $r^*\in\mathbb{S}^{N-1}$ is a constant $\alpha$-curvature metric satisfying $\lambda_1(-\Delta_{\alpha}^*)>\alpha s_{\alpha}^*$, or more specifically, $r^*\in\mathbb{S}^{N-1}$ is a constant $\alpha$-curvature metric with $\alpha s_{\alpha}^*\leq0$. If $\|r(0)-r^*\|$ is small enough, then the solution of $\alpha$-flow (\ref{Def-alpha-Yamabe flow-3d}) exists for all time and converges to $r^*$.\qed\\
\end{theorem}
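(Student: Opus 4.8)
The plan is to treat the $\alpha$-flow (\ref{Def-alpha-Yamabe flow-3d}) as the autonomous system $\dot r = \Gamma(r)$ on the open set $\mathfrak{M}_{\mathcal{T}}$ and to deduce asymptotic stability of the equilibrium $r^*$ from the spectral information already collected. First I would record that $r^*$ being a constant $\alpha$-curvature metric means $K_i = s_\alpha^*(r_i^*)^\alpha$ for every $i$, so $\Gamma(r^*) = 0$ and $r^*$ is indeed a fixed point. Since $\|r(t)\|_2^2$ is constant along the flow (as noted just before the theorem), the sphere $\mathbb{S}^{N-1}$ is invariant, and one may view the flow as an ODE on the $(N-1)$-dimensional manifold $\mathfrak{M}_{\mathcal{T}} \cap \mathbb{S}^{N-1}$, with $r^*$ an interior point because $\mathfrak{M}_{\mathcal{T}}$ is open.

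Next comes the linearization. By Proposition \ref{Proposition-3d-semi-definite}, the hypothesis $\lambda_1(-\Delta_\alpha^*) > \alpha s_\alpha^*$ --- which holds automatically when $\alpha s_\alpha^* \le 0$, since $\lambda_1(-\Delta_\alpha^*) > 0$ --- forces $-D\Gamma|_{r^*}$ to be positive semi-definite of rank $N-1$ with kernel exactly $\mathbb{R} r^*$. The vector $r^*$ spans the normal line to $\mathbb{S}^{N-1}$ at $r^*$, so (as already observed in the proof of Theorem \ref{Thm-3d-isolat-const-alpha-metric}) the restriction of $D\Gamma|_{r^*}$ to $T_{r^*}\mathbb{S}^{N-1}$ is negative definite; equivalently, every eigenvalue of the linearization of the restricted flow at $r^*$ has strictly negative real part, so $r^*$ is a hyperbolic sink for the flow on the sphere.

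Then I would invoke the classical asymptotic-stability theorem for ODEs: a hyperbolic sink has a neighborhood $U$, which we may take inside $\mathfrak{M}_{\mathcal{T}} \cap \mathbb{S}^{N-1}$, such that every trajectory starting in $U$ remains in $U$ for all $t \ge 0$, hence exists for all forward time, and tends to $r^*$, in fact exponentially fast. Choosing $\|r(0) - r^*\|$ small enough that $r(0) \in U$ gives the conclusion; all-time existence is then automatic because the trajectory never approaches the boundary of $\mathfrak{M}_{\mathcal{T}}$. Alternatively one could argue with the Lyapunov function $Q_\alpha$: it is scale invariant, it decreases along the flow since $\dot r = -\|r\|_{\alpha+1}\nabla_r Q_\alpha$ by (\ref{gradient of Q-alpha}), and it has a strict local minimum at $r^*$ on the sphere by the definiteness above, so LaSalle's principle yields the same conclusion.

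The only real subtlety, and the point I would take care with, is the bookkeeping around the degenerate radial direction: one must state precisely that the zero eigenvalue of $D\Gamma|_{r^*}$ lies along the normal to the invariant sphere and therefore disappears once the dynamical system is restricted to $\mathbb{S}^{N-1}$, so that the restricted linearization really is nondegenerate with negative spectrum. Everything else --- that the solution stays in the open set $\mathfrak{M}_{\mathcal{T}}$, and long-time existence --- follows for free from trapping in a small neighborhood of $r^*$.
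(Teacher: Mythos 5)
Your proposal is correct and follows essentially the same route as the paper, which simply observes that Proposition \ref{Proposition-3d-semi-definite} makes $r^*$ an asymptotically stable equilibrium of the flow restricted to the invariant sphere $\mathbb{S}^{N-1}$ and concludes by the standard stability theorem; you have merely filled in the details (invariance of $\|r\|_2$, the kernel of $D\Gamma|_{r^*}$ being the normal direction, trapping inside the open set $\mathfrak{M}_{\mathcal{T}}$) that the paper leaves implicit. Your alternative Lyapunov argument via $Q_\alpha$ is a sound bonus but not needed.
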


Let's look at what happens when $\alpha=0$. The $0$-curvature is just Cooper and Rivin's scalar curvature. By Theorem \ref{Thm-3d-isolat-const-alpha-metric} and Theorem \ref{Thm-3d-convergence of CYF under existence}, constant curvature metrics $r^*$ are always isolated in $\mathbb{S}^{N-1}$ and are always local attractors of the ODE system (\ref{Def-alpha-Yamabe flow-3d}).

\begin{remark}
$\alpha$-flow (\ref{Def-alpha-Yamabe flow-3d}) is not a negative gradient flow, since $\partial (s_{\alpha}r_i^{\alpha})/\partial r_j\neq \partial (s_{\alpha}r_j^{\alpha})/\partial r_i$. We could get similar results by considering the negative gradient flow of $\alpha$-functional (\ref{Def-3d-Yamabe-functional}), i.e.
\begin{equation}\label{3d-gradient-flow}
\dot{r}=-\nabla_rQ_{\alpha}=\frac{1}{\|r\|_{\alpha+1}}\left(s_{\alpha}r_i^{\alpha}-K\right).
\end{equation}
For this flow, Theorem \ref{Thm-3d-compact-exist-const-alpha-metric} and Theorem \ref{Thm-3d-convergence of CYF under existence} are still true.
\end{remark}


\section{Some useful lemmas}\label{usful lemma}
For reader's convenience, we list some lemmas in this section which are used in the proof of our main results. Some of them are proved in detail while some of them we just give related references.
\begin{lemma}\label{Lemma-positive-definite}
Given a function $F\in C^2(\mathds{R}^N)$. Assuming $Hess(F)\geq0$, $rank(Hess(F))=N-1$, $\nabla F$ has at least one zero point and $F(u+t(1,\cdots,1)^T)=F(u)$ for any $u\in\mathds{R}^N$ and $t\in\mathds{R}$. Denote $\mathscr{U}=\{u\in\mathds{R}^N|\sum_iu_i=0\}$. Then the Hessian of $F\big|_{\mathscr{U}}$ (considered as a function of $N-1$ variables) is positive definite.
\end{lemma}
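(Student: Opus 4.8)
The plan is to reduce the claim to a standard fact: a positive semi-definite symmetric bilinear form is positive definite on any linear subspace meeting its kernel only at $0$. The first step is to identify the kernel of $Hess(F)$. Differentiating the invariance relation $F(u+t(1,\cdots,1)^T)=F(u)$ twice in $t$ and setting $t=0$ gives $Hess(F)(u)\cdot(1,\cdots,1)^T=0$ for every $u\in\mathds{R}^N$; hence the vector $\mathbf{1}:=(1,\cdots,1)^T$ always lies in $\ker Hess(F)(u)$. Together with the hypothesis $rank(Hess(F)(u))=N-1$, this forces $\ker Hess(F)(u)=\mathds{R}\mathbf{1}$ at every point $u$.

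Second, I would write the restriction explicitly. Let $A$ be an $N\times(N-1)$ matrix whose columns form an orthonormal basis of $\mathscr{U}$, so that $\mathscr{U}=A\,\mathds{R}^{N-1}$ is the orthogonal complement of $\mathbf{1}$, and set $\widetilde{F}(x)=F(Ax)$ for $x\in\mathds{R}^{N-1}$. By the chain rule $Hess(\widetilde{F})(x)=A^T\,Hess(F)(Ax)\,A$, so that for every $x\in\mathds{R}^{N-1}$ one has $x^T Hess(\widetilde{F})(x)\,x=(Ax)^T Hess(F)(Ax)\,(Ax)$; this records the fact that the Hessian of the restricted function is the Hessian bilinear form of $F$ restricted to $\mathscr{U}\times\mathscr{U}$.

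Finally, I would conclude: for $x\neq0$ the vector $Ax$ is a nonzero element of $\mathscr{U}$, hence $Ax\notin\mathds{R}\mathbf{1}=\ker Hess(F)(Ax)$ because $\mathscr{U}\cap\mathds{R}\mathbf{1}=\{0\}$; since $Hess(F)(Ax)\geq0$ and a positive semi-definite form vanishes on a vector iff that vector lies in its kernel, we get $(Ax)^T Hess(F)(Ax)\,(Ax)>0$. Therefore $Hess(\widetilde{F})(x)>0$ for all $x$, which is precisely the assertion that the Hessian of $F\big|_{\mathscr{U}}$ is positive definite. The argument is essentially routine linear algebra; the only point requiring a moment's care is the kernel identification — using translation invariance to put $\mathbf{1}$ into the kernel and the rank hypothesis to see that it spans the kernel — after which the transversality $\mathscr{U}\cap\mathds{R}\mathbf{1}=\{0\}$ finishes it. (The hypothesis that $\nabla F$ has a zero point is not needed for this particular conclusion; it enters only when the lemma is combined with the properness and injectivity lemmas invoked in the proof of Theorem~\ref{Thm-convergence-nonpositive Euler number}.)
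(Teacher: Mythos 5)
Your proof is correct and follows essentially the same route as the paper: both conjugate $Hess(F)$ by an orthonormal basis of $\mathscr{U}$, use the translation invariance to place $(1,\cdots,1)^T$ in the kernel, and invoke the rank-$(N-1)$ hypothesis. The only cosmetic difference is the final step, where you argue via transversality of $\mathscr{U}$ with the kernel of the positive semi-definite form, while the paper counts the rank of $S^TLS$ from the block decomposition of $A^THess(F)A$; these are equivalent.
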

\textbf{Proof.} Chow and Luo \cite{CL1} stated similar results for $\alpha$-potential $F$ with $\alpha=0$. Since this lemma is very important, we give a direct and rigorous proof here for completeness.
Set $\gamma=(1,\cdots,1)^T/\sqrt{N}$. Choose $A\in O(N)$, such that $A\gamma=(0,\cdots,0,1)^T$, meanwhile, $A$ transforms $\mathscr{U}$ to $\{\zeta\in\mathds{R}^N|\zeta_N=0\}$. Define $g(\zeta_1,\cdots,\zeta_{N-1})\triangleq f(A^T(\zeta_1,\cdots,\zeta_{N-1},0)^T)$, we can finish the proof by showing that $Hess(g)$ is positive definite. Partition $A$ into two blocks, $A=\left[S^{N\times(N-1)}, \gamma\right]$. Write $L=Hess(F)$ for short, then $Hess(g)=S^TLS$. Notice that, $F(u+t\gamma)=F(u)$ implies $L\gamma=0$. Therefore we have
\begin{equation*}
A^TLA=
\begin{bmatrix}
S^T \\ \gamma^T
\end{bmatrix}
L\big[S,\gamma\big]=
\begin{bmatrix}
S^TLS & S^TL\gamma \\
\gamma^T LS & \gamma^T L\gamma
\end{bmatrix}
=\begin{bmatrix}
S^TLS& 0\,\,\\
0 & 0\,\,
\end{bmatrix},
\end{equation*}
which implies that $S^TLS$ is positive semi-definite. Furthermore, $rank(S^TLS)=N-1$. Hence $Hess(g)=S^TLS$ is positive definite, which implies that $F\big|_{\mathscr{U}}$ is strictly convex.\qed \\

\begin{lemma}\label{Lemma-proper}
Given a function $F\in C^2(\mathds{R}^N)$ with $Hess(F)>0$. Assuming $\nabla F$ has at least one zero point, then $\lim\limits_{\|x\|\rightarrow \infty} F(x)=+\infty$. Moreover, $F$ is proper.
\end{lemma}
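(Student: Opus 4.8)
\textbf{Proof proposal for Lemma~\ref{Lemma-proper}.}
The plan is to exploit strict convexity to force $F$ to grow at infinity. First I would fix a zero point $x_0$ of $\nabla F$; since $Hess(F)>0$ everywhere, $F$ is strictly convex on $\mathds{R}^N$, so $x_0$ is in fact the unique critical point and the unique global minimum of $F$. By subtracting the constant $F(x_0)$ we may assume $F(x_0)=0$ and hence $F\geq 0$ everywhere; it then suffices to show $F(x)\to+\infty$ as $\|x\|\to\infty$, and properness of $F$ (preimages of compact sets are compact) follows immediately, because $F^{-1}([a,b])$ is closed and, by the growth statement, bounded.

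To get the growth, I would restrict $F$ to rays emanating from $x_0$. For a unit vector $v$, set $\phi_v(t)=F(x_0+tv)$ for $t\geq 0$. Then $\phi_v$ is a $C^2$ strictly convex function of one variable with $\phi_v(0)=0$ and $\phi_v'(0)=\langle\nabla F(x_0),v\rangle=0$, so $\phi_v'$ is strictly increasing and positive for $t>0$. The key quantitative step is to produce a uniform (in $v$) lower bound: I would use continuity of $Hess(F)$ on the compact sphere $\{x:\|x-x_0\|=1\}$ to find $\mu>0$ with $Hess(F)\geq \mu I$ on that sphere, so that $\phi_v''(1)=v^THess(F)(x_0+v)v\geq\mu$ for every unit $v$. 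Convexity of $\phi_v'$ is not automatic, so instead I would argue directly: for $t\geq 1$, convexity of $\phi_v$ gives $\phi_v(t)\geq \phi_v(1)+\phi_v'(1)(t-1)$, and the slope $\phi_v'(1)$ is bounded below by a positive constant independent of $v$. That last bound comes from $\phi_v'(1)=\int_0^1\phi_v''(s)\,ds$ together with a uniform positive lower bound on $\phi_v''(s)$ for $s\in[1/2,1]$, again via compactness of the closed annulus $\{x:1/2\leq\|x-x_0\|\leq 1\}$ and continuity of $Hess(F)$ there. Writing $c=\inf_{\|v\|=1}\phi_v'(1)>0$, we obtain $F(x_0+tv)\geq c(t-1)$ for all $t\geq 1$ and all unit $v$, i.e. $F(x)\geq c(\|x-x_0\|-1)\to+\infty$.

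The main obstacle is precisely this uniformity over directions: strict convexity gives growth along each individual ray, but a priori the rate could degenerate as the direction varies, and one must rule that out. The resolution is the compactness of the unit sphere (or a thin annulus) around $x_0$ combined with continuity of the Hessian, which turns the pointwise positivity $Hess(F)>0$ into a uniform spectral bound on a compact set; everything else is a routine one-variable convexity estimate. Once the linear lower bound $F(x)\geq c(\|x-x_0\|-1)$ is in hand, both assertions of the lemma are immediate. \qed
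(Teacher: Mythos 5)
Your proof is correct and follows essentially the same route as the paper: restrict $F$ to rays from the critical point, obtain a linear lower bound from one-variable convexity, and use compactness of the unit sphere to make the slope uniformly positive. The only (cosmetic) difference is that the paper gets the uniform slope directly as $\inf_{\omega\in\mathbb{S}^{N-1}}\omega\cdot\nabla F(\omega)>0$ by continuity of $\nabla F$, whereas you derive it from a uniform Hessian eigenvalue bound on an annulus; both are valid.
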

\textbf{Proof.}
Without loss of generality, assume $\nabla F(0)=0$. First, we consider the case $N=1$. So we have $F: ~\mathds{R}\to \mathds{R}$ with $F''>0$ and $F'(0)=0$. Since $F''>0$ and $F'(0)=0$, then $F'(1)=a>0$ and $F'(-1)=-b<0$. Again, by $F''>0$, $F'(x)>F'(1)=a>0$ for $x>1$, and $F'(x)<F'(-1)=-b$. Integrating this gives
\begin{equation*}
\begin{aligned}
&F(x)\ge a(x-1)+F(1)~~~\mbox{ for $x>1$}\\
&F(x)\ge b(-1-x)+F(-1)~~~\mbox{ for $x<-1$}.
\end{aligned}
\end{equation*}
That means $F(x)\ge \min\{a,b\}|x|+C$, where $C=\min\{-a+F(1),-b+F(-1)\}$. Obviously, $\lim\limits_{|x|\to\infty}F(x)=+\infty$ and $F$ is proper.

For $N>1$, for any $\omega\in\mathbb{S}^{N-1}$, we consider the ray $\{t\omega\}_{0<t<\infty}\subset \mathds{R}^N$. Let
$f_\omega(t)=F(t\omega )$, then $f_\omega'(0)=0$ and $f_\omega''(t)=Hess_F(\omega,\omega)>0$, then as in the dimension $N=1$ case, there exists constant $a_\omega=f_\omega'(1)=\nabla F(\omega)\cdot\omega~>0$ and  such that
\begin{equation*}
\begin{aligned}
F(t\omega)=f_\omega(t)\ge a_\omega t-a_\omega+F(\omega),~~~\mbox{ for $t>1$}.
\end{aligned}
\end{equation*}
By the arguments above, we have proved $\omega\cdot \nabla F(\omega)>0$ for any $\omega\in \mathbb{S}^{N-1}$, by compactness of $\mathbb{S}^{N-1}$ and $F$ is $C^2$,
\begin{equation*}
\begin{aligned}
A:=\inf_{\omega\in \mathbb{S}^{N-1}}\omega\cdot \nabla F(\omega)>0.
\end{aligned}
\end{equation*}
Let $B:=\min\limits_{\omega\in \mathbb{S}^{N-1}}F(\omega)$, then we have
\begin{equation*}
\begin{aligned}
F(x)\ge A\|x\|-A+B,~~~\mbox{ for $\|x\|\ge 1$}.
\end{aligned}
\end{equation*}
It implies $\lim\limits_{\|x\|\to\infty}F(x)=+\infty$ and $F$ is proper.
\qed \\

\begin{lemma}(\cite{CL1})\label{Lemma-injective}
Suppose $\Omega\subset \mathds{R}^N$ is convex, the function $h:\Omega\rightarrow \mathds{R}$
is strictly convex, then the map $\nabla h:\Omega\rightarrow \mathds{R}^N$ is injective.
\end{lemma}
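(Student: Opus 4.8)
The plan is to reduce the statement to a one-variable fact about strictly convex functions. Suppose, towards a contradiction, that there are distinct points $x,y\in\Omega$ with $\nabla h(x)=\nabla h(y)$. Since $\Omega$ is convex, the whole segment joining $x$ and $y$ lies in $\Omega$, so $\phi(t):=h\big(x+t(y-x)\big)$ is well defined for $t\in[0,1]$.

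First I would observe that $\phi$ is strictly convex on $[0,1]$: the parametrization $t\mapsto x+t(y-x)$ is affine and injective, so it carries a proper convex combination of $t$-values to a proper convex combination of the corresponding points of the segment, and strict convexity of $h$ then gives the strict inequality for $\phi$. Next, by the chain rule $\phi$ is differentiable with $\phi'(t)=\nabla h\big(x+t(y-x)\big)\cdot(y-x)$; in particular $\phi'(0)=\nabla h(x)\cdot(y-x)$ and $\phi'(1)=\nabla h(y)\cdot(y-x)$, so the hypothesis $\nabla h(x)=\nabla h(y)$ yields $\phi'(0)=\phi'(1)$. On the other hand, a differentiable strictly convex function of one real variable has a strictly increasing derivative, so $\phi'(0)<\phi'(1)$ since $0<1$. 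This contradiction proves that $\nabla h$ is injective.

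The only step that is not completely formal is the one-variable fact just invoked. I would justify it from the tangent-line inequalities for a strictly convex differentiable $\phi$: for $a\neq b$ one has $\phi(b)>\phi(a)+\phi'(a)(b-a)$ and, symmetrically, $\phi(a)>\phi(b)+\phi'(b)(a-b)$; adding these gives $\big(\phi'(b)-\phi'(a)\big)(b-a)>0$, i.e. $\phi'$ is strictly increasing. Equivalently, one could run the argument directly in $\mathds{R}^N$: the same reasoning shows $\big(\nabla h(x)-\nabla h(y)\big)\cdot(x-y)>0$ whenever $x\neq y$, from which injectivity is immediate. I do not expect any genuine obstacle here — the result is classical — so the only thing worth presenting carefully is this one-dimensional convexity input.
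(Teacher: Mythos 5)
Your argument is correct: reducing to the segment, noting $\phi'(0)=\phi'(1)$, and contradicting the strict monotonicity of the derivative of a one-variable strictly convex differentiable function (justified via the strict tangent-line inequalities) is the standard proof, and your closing remark that the same computation gives the strict monotonicity $\bigl(\nabla h(x)-\nabla h(y)\bigr)\cdot(x-y)>0$ directly in $\mathds{R}^N$ is exactly how this is usually packaged. The paper itself offers no proof of this lemma, simply citing \cite{CL1}, so there is nothing to contrast with; your write-up supplies the missing details correctly.
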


\begin{lemma}\label{Lemma-property of L}
(\cite{CL1}, Proposition 3.9.)
Suppose $(M, \mathcal{T}, \Phi)$ is a weighted triangulated surface. $r=(r_1,\cdots,r_N)^T$ is circle packing metric, while $K=(K_1,\cdots,K_N)^T$ is classical discrete Gauss curvature. Then $L=\frac{\partial(K_1, \cdots, K_N)}{\partial (u_1,\cdots,u_N)}$ is positive semi-definite with rank $N$-$1$ and kernel $\{t(1,\cdots,1)^T| t\in \mathds{R}\}$.
Moreover, $\frac{\partial K_i}{\partial u_i}>0$, $\frac{\partial K_i}{\partial u_j}<0$ for $i\sim j$ and $\frac{\partial K_i}{\partial u_j}=0$ for others.
\end{lemma}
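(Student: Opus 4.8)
The approach is the triangle-by-triangle localization (as in Chow and Luo \cite{CL1}): write $L$ as a sum of contributions supported on the faces of $\mathcal{T}$, analyze the $3\times3$ block coming from a single Euclidean triangle, and then reassemble. From $K_i=2\pi-\sum_{\{ijk\}\in F}\theta_i^{jk}$ we get
\begin{equation*}
L=\sum_{\{ijk\}\in F}\widehat M_{ijk},
\end{equation*}
where $\widehat M_{ijk}$ is the $N\times N$ matrix whose only nonzero entries form, on the rows and columns indexed by $i,j,k$, the block $M_{ijk}:=-\,\partial(\theta_i^{jk},\theta_j^{ik},\theta_k^{ij})/\partial(u_i,u_j,u_k)$. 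So it is enough to establish, for one triangle, that $M_{ijk}$ is symmetric, has vanishing row sums, and has strictly negative off-diagonal entries: these three properties say precisely that $M_{ijk}$ is the Laplacian of the complete graph on $\{i,j,k\}$ with positive edge weights, hence positive semi-definite of rank $2$ with kernel $\{t(1,1,1)^T\}$.

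For the single triangle with $l_{ij}=\sqrt{r_i^2+r_j^2+2r_ir_j\cos\Phi_{ij}}$ (and cyclically), I would compute every entry of $M_{ijk}$ by the chain rule through the edge lengths, using the elementary identities $\partial\theta_i/\partial l_{jk}=l_{jk}/(2A)$ and $\partial\theta_i/\partial l_{ij}=-\,l_{jk}\cos\theta_j/(2A)$ with $A$ the area, together with $\partial l_{ij}/\partial u_i=(r_i^2+r_ir_j\cos\Phi_{ij})/l_{ij}$. The vanishing of the row sums is immediate from $\theta_i^{jk}+\theta_j^{ik}+\theta_k^{ij}=\pi$; the symmetry $\partial\theta_i^{jk}/\partial u_j=\partial\theta_j^{ik}/\partial u_i$ (equivalently: the $1$-form $\sum_i\theta_i^{jk}\,du_i$ is closed on each face) drops out of the same computation. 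Granting those, the one substantive point is the sign $\partial\theta_i^{jk}/\partial u_j>0$ for $i\ne j$; once that is in hand the diagonal entries $-\partial\theta_i^{jk}/\partial u_i=\sum_{j\ne i}\partial\theta_i^{jk}/\partial u_j$ are automatically positive, and the triangle is finished.

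To conclude, reassemble: each $\widehat M_{ijk}$ is positive semi-definite, so $L=\sum\widehat M_{ijk}$ is positive semi-definite; and for $x\in\mathds{R}^N$, $x^TLx=\sum_{\{ijk\}}x^T\widehat M_{ijk}x=0$ forces $M_{ijk}(x_i,x_j,x_k)^T=0$ for every face, i.e. $x$ is constant on each triangle, hence globally constant because the $1$-skeleton of a triangulated closed (connected) surface is connected. Thus $\ker L=\{t(1,\dots,1)^T\}$ and $\operatorname{rank}L=N-1$. The ``moreover'' assertions then follow at once: $\partial K_i/\partial u_i=-\sum_{\{ijk\}\ni i}\partial\theta_i^{jk}/\partial u_i>0$; for $i\sim j$ the edge $\{ij\}$ lies in at least one face (exactly two, on a closed surface), each contributing $-\partial\theta_i^{jk}/\partial u_j<0$, so $\partial K_i/\partial u_j<0$; and if $i\not\sim j$ there is no common face, so $\partial K_i/\partial u_j=0$.

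The main obstacle is the per-triangle inequality $\partial\theta_i^{jk}/\partial u_j>0$. This is a genuine computation rather than a soft monotonicity statement: increasing $r_j$ lengthens both edges at $j$ but leaves the opposite edge $l_{ik}$ fixed, so the effect on $\theta_i$ is a competition between the growth of its opposite edge $l_{jk}$ (which pushes $\theta_i$ up) and the growth of the adjacent edge $l_{ij}$ (which pushes $\theta_i$ down), and the balance tips the right way only because of the precise law $l_{ij}^2=r_i^2+r_j^2+2r_ir_j\cos\Phi_{ij}$ with $\cos\Phi_{ij}\ge0$ --- this is exactly where the circle-packing structure and the hypothesis $\Phi_{ij}\in[0,\tfrac{\pi}{2}]$ enter (the sign can fail for a general edge-length metric). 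I would resolve it either by pushing the trigonometric identities to a manifestly positive closed form, or by the conceptual route: the three circles of the face $\{ijk\}$ have a common power center $O_{ijk}$ (the point constructed in Example 2), and $\partial\theta_i^{jk}/\partial u_j$ equals a ratio of signed lengths determined by $O_{ijk}$ that is positive exactly when the circles do not overlap too strongly, i.e. when $\Phi_{ij}\le\pi/2$.
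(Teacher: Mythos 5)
The paper does not prove this lemma at all: it is quoted verbatim from Chow--Luo \cite{CL1} (their Proposition 3.9 plus the sign statements of their Lemma 2.3), so there is no ``paper's proof'' to compare against. Your plan reconstructs exactly the Chow--Luo argument: localize to faces, show each $3\times3$ block $M_{ijk}$ is a weighted graph Laplacian of the triangle with positive edge weights, and reassemble using connectivity of the $1$-skeleton. The global part of your argument (positive semi-definiteness of the sum, the kernel computation, and the deduction of the ``moreover'' signs from the per-triangle signs) is complete and correct, and your chain-rule identities $\partial\theta_i/\partial l_{jk}=l_{jk}/(2A)$, $\partial\theta_i/\partial l_{ij}=-l_{jk}\cos\theta_j/(2A)$, $\partial l_{ij}/\partial u_i=(r_i^2+r_ir_j\cos\Phi_{ij})/l_{ij}$ are the right ones. (One small slip: the identity $\theta_i^{jk}+\theta_j^{ik}+\theta_k^{ij}=\pi$ gives vanishing \emph{column} sums of $M_{ijk}$; the vanishing row sums come either from that plus the symmetry you are proving, or directly from scale invariance of the angles under $r\mapsto\lambda r$, i.e. $u\mapsto u+t(1,1,1)^T$.)

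The genuine gap is the one you yourself flag: the inequality $\partial\theta_i^{jk}/\partial u_j>0$ for $i\neq j$ is asserted, with two candidate strategies, but never established. This is not a detail --- it is the entire analytic content of the lemma; everything else is linear algebra and graph connectivity. As you correctly observe, the sign is false for general edge-length (piecewise linear) metrics and holds only because of the specific law $l_{ij}^2=r_i^2+r_j^2+2r_ir_j\cos\Phi_{ij}$ with $\Phi_{ij}\in[0,\pi/2]$, so no soft argument can substitute for the computation. In Chow--Luo this is their Lemma 2.3, proved by carrying your chain rule to completion and exhibiting $\partial\theta_i^{jk}/\partial u_j=\partial\theta_j^{ik}/\partial u_i$ as an explicit manifestly positive expression (equivalently, in Glickenstein's dual formulation, as a ratio involving the distance from the radical/power center $O_{ijk}$ to the edge $\{ij\}$, which is nonnegative precisely when all weights lie in $[0,\pi/2]$); the symmetry drops out of the same closed form, as you predict. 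Until one of your two routes is actually executed, the proof is a correct skeleton with its load-bearing step missing.
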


\begin{lemma}\label{Lemma-admissible-curvature space}
(\cite{T1, MR, CL1})
Given a weighted triangulated surface $(M, \mathcal{T}, \Phi)$. Consider Thurston's circle packing metric $r$ and classical discrete Gauss curvature $K$. For any proper subset $I\subset V$, denote $\mathscr{Y}_I\triangleq\{x\in \mathds{R}^N |\sum_{i\in I}x_i >-\sum_{(e,v)\in Lk(I)}(\pi-\Phi(e))+2\pi\chi(F_I)\}$, $\mathscr{K}_{GB}\triangleq\{x\in \mathds{R}^N|\sum_{i=1}^Nx_i=2\pi\chi(X)\}$. Then the space of all admissible $K$-curvature is
$\{K=K(r)|r\in \mathds{R}^N_{>0}\}=\mathscr{K}_{GB} \cap (\mathop{\cap} _{\phi\neq I \subsetneqq V} \mathscr{Y}_I )$.
\end{lemma}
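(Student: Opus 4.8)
The plan is to prove the two inclusions separately; write $\mathcal{P}=\mathscr{K}_{GB}\cap\bigcap_{\phi\neq I\subsetneqq V}\mathscr{Y}_I$ for the polytope on the right. For $\{K(r):r\in\mathds{R}^N_{>0}\}\subseteq\mathcal{P}$, membership in $\mathscr{K}_{GB}$ is just discrete Gauss--Bonnet, $\sum_iK_i=2\pi\chi(M)$. For a fixed proper nonempty $I$ I would write $\sum_{i\in I}K_i=2\pi|I|-A$ with $A$ the sum of all inner angles located at vertices of $I$, and split the triangles meeting $I$ according to how many of their vertices lie in $I$, into numbers $t_3,t_2,t_1$. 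A triangle of the first kind contributes its full angle sum $\pi$; one of the second kind $\{ijk\}$ with $i,j\in I$ contributes $\theta_i+\theta_j=\pi-\theta_k$; one of the third kind $\{ijk\}$ with $i\in I$ contributes $\theta_i^{jk}$, and here I invoke the basic geometric feature of Thurston's circle packing that in every conformal triangle $\theta_i^{jk}<\pi-\Phi_{jk}$ (with $r_j,r_k$ fixed, $\theta_i^{jk}$ decreases in $r_i$ and tends to $\pi-\Phi_{jk}$ as $r_i\to 0^+$). Combining this with the bookkeeping identities $|Lk(I)|=t_1$ (a triangle with single $I$-vertex $v$ corresponds to the pair $(e,v)$, $e$ its opposite edge), $2E_I=3t_3+t_2$ (each edge of a closed surface lies in two triangles, $E_I$ being the number of edges inside $I$), $\chi(F_I)=|I|-E_I+t_3$, and the fact that connectedness of $M$ forces $t_1+t_2\geq 1$, a short computation gives $\sum_{i\in I}K_i>-\sum_{(e,v)\in Lk(I)}(\pi-\Phi(e))+2\pi\chi(F_I)$, i.e. $K(r)\in\mathscr{Y}_I$.

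For the reverse inclusion, fix $c\in\mathcal{P}$ and pass to coordinates $u_i=\ln r_i$. By Lemma~\ref{Lemma-property of L}, $L=\partial(K_1,\dots,K_N)/\partial(u_1,\dots,u_N)$ is symmetric, so $\sum_i(K_i-c_i)\,du_i$ is a closed $1$-form and $G(u)=\int_{u_0}^u\sum_i(K_i-c_i)\,du_i$ is a well-defined $C^2$ function on $\mathds{R}^N$ with $\nabla G=K-c$ and $\mathrm{Hess}\,G=L\geq 0$ of rank $N-1$, kernel $\mathds{R}(1,\dots,1)^T$. Since $K$ is scaling invariant and $\sum_i(K_i-c_i)=2\pi\chi(M)-2\pi\chi(M)=0$, $G$ is invariant under $u\mapsto u+t(1,\dots,1)^T$ and descends to $\mathscr{U}=\{\sum_iu_i=0\}$, where it is strictly convex (the argument of Lemma~\ref{Lemma-positive-definite}, which only uses positive semidefiniteness, rank $N-1$, and translation invariance). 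It then suffices to show $G|_{\mathscr{U}}$ is proper: a continuous, proper, strictly convex function on $\mathscr{U}\cong\mathds{R}^{N-1}$ has a unique minimum $u^*$, and there $\nabla G(u^*)=K(e^{u^*})-c=0$, so $e^{u^*}$ is the desired metric (uniqueness also from Lemma~\ref{Lemma-injective}). Note the properness half of Lemma~\ref{Lemma-proper} cannot be used here, since it presupposes a zero of the gradient.

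Properness must instead come from the $\mathscr{Y}_I$ conditions. Since $G|_{\mathscr{U}}$ is convex, $t\mapsto\langle K(u_0+t\omega)-c,\omega\rangle$ is nondecreasing for each unit $\omega\in\mathscr{U}$, and $G|_{\mathscr{U}}$ is proper provided $\lim_{t\to+\infty}\langle K(u_0+t\omega)-c,\omega\rangle>0$ for every such $\omega$. Ordering the distinct coordinate values of $\omega$ as $w_1<\cdots<w_p$ ($p\geq 2$) and setting $J_k=\{i:\omega_i\leq w_k\}$, a chain of proper nonempty subsets, summation by parts together with $\sum_i\omega_i=0$ and $\sum_i(K_i-c_i)=0$ gives
\[\langle K(u_0+t\omega)-c,\omega\rangle=-\sum_{k=1}^{p-1}(w_{k+1}-w_k)\left(\sum_{i\in J_k}K_i(u_0+t\omega)-\sum_{i\in J_k}c_i\right).\]
As $t\to+\infty$ the radii indexed by $J_k$ become negligible against the rest, and the limiting-angle analysis (the angle at a vertex of comparatively small radius tends to $\pi$ minus the weight of its opposite edge, the angle at a vertex of comparatively large radius tends to $0$, both limits insensitive to the relative rates within $J_k$), together with the same identities $|Lk(J_k)|=t_1$, $2E_{J_k}=3t_3+t_2$, $\chi(F_{J_k})=|J_k|-E_{J_k}+t_3$ as above, yields $\lim_t\sum_{i\in J_k}K_i(u_0+t\omega)=-\sum_{(e,v)\in Lk(J_k)}(\pi-\Phi(e))+2\pi\chi(F_{J_k})$, exactly the boundary value of the defining inequality of $\mathscr{Y}_{J_k}$. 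Since $c\in\mathscr{Y}_{J_k}$ makes $\sum_{i\in J_k}c_i$ strictly larger than this value, every bracket above is eventually strictly negative, and the outer minus sign makes the limit strictly positive, so $G|_{\mathscr{U}}$ is proper and the argument closes.

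The main obstacle is exactly the asymptotic identity in the previous paragraph: controlling the inner angles of the Euclidean triangles as an arbitrary subset of the radii degenerates, checking that the limits are independent of the relative rates inside $J_k$, and matching the resulting combinatorial sum to the $\mathscr{Y}_{J_k}$-boundary value. This is the technical heart, essentially Thurston's and Marden--Rodin's degeneration analysis. One could alternatively replace the variational step by a degree argument: the curvature map on the normalized configuration space is a local diffeomorphism with image inside $\mathcal{P}$, and the same degeneration analysis shows the image is closed in $\mathcal{P}$, hence all of $\mathcal{P}$; the boundary analysis remains the crux either way.
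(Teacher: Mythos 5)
The paper does not prove this lemma at all --- it is quoted verbatim from the cited sources \cite{T1,MR,CL1} --- so there is no internal proof to compare against; your proposal has to be judged as a reconstruction of the classical argument, and as such it is correct and essentially complete. The forward inclusion is right: the count $2E_I=3t_3+t_2$, $\chi(F_I)=|I|-E_I+t_3$ turns the bound $A<\pi t_3+\pi t_2+\sum_{(e,v)\in Lk(I)}(\pi-\Phi(e))$ into exactly the $\mathscr{Y}_I$ inequality, and strictness comes from $t_1+t_2\ge 1$ (connectedness) together with $\theta_k>0$ and Thurston's strict bound $\theta_i^{jk}<\pi-\Phi_{jk}$. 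The reverse inclusion is the Chow--Luo/Marden--Rodin variational scheme: the Abel-summation identity is correct, and the degeneration analysis does go through independently of the relative rates inside $J_k$ (the law of cosines gives $\cos\theta_i\to-\cos\Phi_{jk}$ whenever $r_i/r_j,r_i/r_k\to 0$, and $\theta_k\to 0$ whenever $r_i/r_k,r_j/r_k\to 0$, with no constraint on the remaining ratios), which matches the $\mathscr{Y}_{J_k}$ boundary value via the same incidence counts. Two small points deserve a sentence each in a full writeup: (i) the step ``divergence of $G$ along every ray implies properness'' uses that sublevel sets of a convex function are convex and a closed convex set containing no ray from an interior point is bounded --- this is where convexity is genuinely needed, as the implication fails for general continuous functions; (ii) monotonicity of $t\mapsto\langle K-c,\omega\rangle$ follows from $\omega^TL\omega\ge 0$, so the pointwise limits you compute really are the suprema and the brackets being eventually negative gives a strictly positive asymptotic slope. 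You correctly identify the boundary/degeneration analysis as the technical heart, and your alternative degree-theoretic route is indeed Thurston's original approach; either way the argument closes.
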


\begin{lemma}\label{Lemma-3d-Lambda matrix}
(\cite{CR, Ri, G1, G2})
Suppose $(M, \mathcal{T})$ is a triangulated 3-manifold with sphere packing metric $r$,
$\mathcal{S}=\sum K_ir_i$ is the Einstein-Hilbert-Regge functional. Then we have
\begin{equation}
\nabla_r\mathcal{S}=K.
\end{equation}
If we set
\begin{displaymath}
\Lambda=Hess_r\mathcal{S}=
\frac{\partial(K_{1},\cdots,K_{N})}{\partial(r_{1},\cdots,r_{N})}=
\left(
\begin{array}{ccccc}
 {\frac{\partial K_1}{\partial r_1}}& \cdot & \cdot & \cdot &  {\frac{\partial K_1}{\partial r_N}} \\
 \cdot & \cdot & \cdot & \cdot & \cdot \\
 \cdot & \cdot & \cdot & \cdot & \cdot \\
 \cdot & \cdot & \cdot & \cdot & \cdot \\
 {\frac{\partial K_N}{\partial r_1}}& \cdot & \cdot & \cdot &  {\frac{\partial K_N}{\partial r_N}}
\end{array}
\right),
\end{displaymath}
then $\Lambda$ is positive semi-definite with rank $N-1$ and
the kernel of $\Lambda$ is the linear space spanned by the vector $r$.
\end{lemma}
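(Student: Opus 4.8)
The plan is to first prove the gradient identity $\nabla_r\mathcal{S}=K$ by a Schläfli-type argument, and then to obtain all properties of $\Lambda=Hess_r\mathcal{S}$ by localizing to a single tetrahedron and reassembling. For the gradient, fix a conformal Euclidean tetrahedron $t\in T$ with edge lengths $\ell_{ab}=r_a+r_b$, solid angles $\alpha_a^{(t)}$, and dihedral angles $\beta_{ab}$. The link of a vertex $a$ in $t$ is a spherical triangle of area $\alpha_a^{(t)}$ whose three angles are the dihedral angles along the edges through $a$, so spherical Gauss--Bonnet gives $\alpha_a^{(t)}=\sum_{b\neq a}\beta_{ab}-\pi$. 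Multiplying by $r_a$, summing over the four vertices, and using $\ell_{ab}=r_a+r_b$ yields
\begin{equation*}
\sum_{a\in t}r_a\alpha_a^{(t)}=\sum_{\{a,b\}\subset t}\ell_{ab}\beta_{ab}-\pi\sum_{a\in t}r_a.
\end{equation*}
Differentiating and invoking the Euclidean Schläfli formula $\sum_{\{a,b\}}\ell_{ab}\,d\beta_{ab}=0$, the dihedral terms cancel and one is left with $\sum_{a\in t}r_a\,d\alpha_a^{(t)}=0$, i.e. $\sum_{a\in t}r_a\,\partial\alpha_a^{(t)}/\partial r_b=0$ for every $b$. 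Since $\mathcal{S}=\sum_j K_j r_j=4\pi\sum_j r_j-\sum_{t\in T}\sum_{a\in t}r_a\alpha_a^{(t)}$, differentiating in $r_b$ and discarding these vanishing inner sums gives $\partial\mathcal{S}/\partial r_b=4\pi-\sum_{t\ni b}\alpha_b^{(t)}=K_b$, which is the assertion $\nabla_r\mathcal{S}=K$.

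For the Hessian, note first that solid angles depend smoothly on edge lengths throughout the nondegeneracy locus $\mathfrak{M}_{\mathcal{T}}$, so $\mathcal{S}$ is smooth there and $\Lambda=Hess_r\mathcal{S}$ is automatically symmetric; in particular $\partial K_i/\partial r_j=\partial K_j/\partial r_i$. Differentiating the scaling invariance $K_i(\lambda r)=K_i(r)$ at $\lambda=1$ gives $\Lambda r=0$, so $\mathds{R}r\subseteq\ker\Lambda$. Writing $\mathcal{S}_t=\sum_{a\in t}r_a\alpha_a^{(t)}$ one has $\Lambda=-\sum_{t\in T}Hess_r\mathcal{S}_t$, each summand supported on the $4\times4$ block indexed by the vertices of $t$. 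Thus the whole statement reduces to the local fact that, for a single conformal Euclidean tetrahedron, $-Hess_r\mathcal{S}_t$ is positive semi-definite of rank $3$ with kernel spanned by the restriction of $r$ to the vertices of $t$ (membership of $r|_t$ in this kernel is automatic since $\mathcal{S}_t$ is homogeneous of degree one).

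Granting this local fact, $\Lambda$ is a sum of positive semi-definite matrices, hence positive semi-definite. If $v^T\Lambda v=0$ then $(v|_t)^T(-Hess_r\mathcal{S}_t)(v|_t)=0$ for every $t$, so $v|_t$ is proportional to $r|_t$ on each tetrahedron, say $v_a=c_t r_a$ for $a\in t$. Whenever two tetrahedra share a vertex $a$, comparing at $a$ and using $r_a>0$ forces $c_t=c_{t'}$; since $M$ is connected the tetrahedra of $\mathcal{T}$ are joined by chains of vertex-adjacent tetrahedra (every edge lies in some tetrahedron), so all the $c_t$ coincide and $v\in\mathds{R}r$. Hence $\ker\Lambda=\mathds{R}r$ and $\Lambda$ has rank $N-1$.

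The main obstacle is precisely the local statement used above: the concavity of $\mathcal{S}_t$ for one conformal tetrahedron together with its exact rank and kernel. This is the content of Cooper--Rivin \cite{CR}, Rivin \cite{Ri}, and Glickenstein \cite{G1,G2}. One route computes the Jacobian $\partial(\alpha_i,\alpha_j,\alpha_k,\alpha_l)/\partial(r_i,r_j,r_k,r_l)$ explicitly from the Euclidean cosine laws for the faces and dihedral angles and checks that the associated quadratic form is negative on the hyperplane transverse to $r|_t$; Glickenstein's argument instead reads off the same matrix from the geometry of the edge-tangent sphere and the radical (dual) decomposition of the tetrahedron. I would import this as a black box rather than reproduce the computation.
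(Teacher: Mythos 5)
Your argument is correct: the Girard/Schl\"afli computation giving $\sum_{a\in t}r_a\,d\alpha_a^{(t)}=0$ and hence $\nabla_r\mathcal{S}=K$, the block decomposition $\Lambda=-\sum_{t}\mathrm{Hess}_r\mathcal{S}_t$, and the vertex-connectivity argument identifying $\ker\Lambda=\mathds{R}r$ are all sound, and the one genuinely hard ingredient --- that $-\mathrm{Hess}_r\mathcal{S}_t$ for a single conformal tetrahedron is positive semi-definite of rank $3$ with kernel $\mathds{R}(r|_t)$ --- is precisely the content of the cited works of Cooper--Rivin, Rivin and Glickenstein. The paper itself supplies no proof of this lemma, only the citation to those same references, so your proposal is consistent with (indeed more detailed than) the paper's treatment.
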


\textbf{Acknowledgements}\\[8pt]
The authors would like to thank Dr. Wenshuai Jiang, Liangming Shen for many helpful conversations. 
The first author would also like to give special thanks to Dr. Yurong Yu for her supports and encouragements during the work.
The research of the second author is partially supported by National Natural Science Foundation of China under grant no. 11301402 and 11301399.
He would also like to thank Professor Guofang Wang for the invitation to the Institute of Mathematics of the University of Freiburg 
and for his encouragement and many useful conversations during the work.

(Huabin Ge)
Department of Mathematics, Beijing Jiaotong University, Beijing 100044, P.R. China

E-mail: hbge@bjtu.edu.cn\\[2pt]

(Xu Xu) School of Mathematics and Statistics, Wuhan University, Wuhan 430072, P.R. China

E-mail: xuxu2@whu.edu.cn\\[2pt]

\end{document}